\theoremstyle{thmstyleone}%
\newtheorem{theorem}{Theorem}
\newtheorem{lemma}{Lemma}
\theoremstyle{thmstyletwo}%
\newtheorem{remark}{Remark}%
\theoremstyle{thmstylethree}%
\begin{document}

\title{On some open problems in reliability theory}

\author{Geni Gupur\\
College of Mathematics and Systems Science,\\
 Xinjiang University, Huarui Street, Urumqi 830017,\\
  Xinjiang, P.R.China\\
E-mail: geni$\textcircled{a}$xju.edu.cn}

\date{}

\maketitle

\abstract{We study a stochastic scheduling on an unreliable machine with general up-times and general set-up times which is described by a group of partial differential equations with Dirac-delta functions in the boundary and initial conditions. In special case that the random processing rate of job $i,$  the random up-time rate of job $i$  and the random repair rate of job $i$ are constants, we determine the explicit expression of its time-dependent solution and give the asymptotic behavior of its time-dependent solution. Our result implies that $C_0-$semigroup theory is not suitable for this model. In general case, we determine the Laplace transform of its time-dependent solution. Next, we convert the model into an abstract Cauchy problem whose underlying operator is an evolution family. Finally, we leave some open problems.\\
\textbf{Keywords:} Reliability model, time-dependent solution, Dirac-delta function}

\section{Introduction}\label{sec1}

People are often concerned with the reliability of products they use and of the friends and associates with whom they interact daily in myriad of ways. The growth, recognition and definitization of the reliability function were given much impetus during the 1960s. During those processes, three main technical areas of reliability evolved: (1) reliability engineering; (2) operations analysis; (3) reliability mathematics. Each of these areas developed its own body of knowledge. This paper focuses on the mathematics of reliability. Throughout the history of reliability theory, large numbers of reliability problems were solved by using reliability models. There are several methods to establish such models. Among them the supplementary variable technique plays an important role. In 1955, Cox \cite{Cox1955} first put forward the supplementary variable technique and established the M/G/1 queueing model, readers may find dynamic analysis for the M/G/1 queueing model in \cite{GupurLiZhu,Gupur2011a}. In 1963, Gavier \cite{Gavier1963} first used the supplementary variable technique to study a reliability model. After that, other researchers widely applied this idea to study many reliability problems, see \cite{Gupur2011,Gupur2020,LiCao,LiShi,Linton1976} for instance. The reliability models that were established by the supplementary variable technique were described by partial differential equations with integral boundary conditions. According to our knowledge, these  partial differential equations with integral boundary conditions are split into two categories: partial differential equations without Dirac-delta functions in the boundary and initial conditions, partial differential equations with Dirac-delta functions in the boundary and initial conditions. We have a systematic method to study the partial differential equations without Dirac-delta functions in the boundary and initial conditions as follows \cite{Gupur2003,Gupur2014,Gupur2011,Gupur2016,Gupur2020,Gupur2022,Gupur2005,GupurWong,HajiRadl,HuXuYuZhu}: firstly we convert them into abstract Cauchy problems by choosing suitable state spaces and underlying operators; next by using $C_0-$semigroup theory we prove that they have unique positive time-dependent solutions; thirdly by studying spectral distribution on the imaginary axis we prove that their time-dependent solutions exponentially converge to their steady-state solutions if the number of the equations is finite, their time-dependent solutions strongly converge to their steady-state solutions if the number of the equations are infinite; finally by studying spectra of the underlying operators on the left half complex plane we give asymptotic expression of their time-dependent solutions if  the number of the equations is finite. It is pity that any results about dynamics of the reliability  models with Dirac-delta functions in the boundary and initial conditions have not been found in the literature. The main difficult point is the Dirac-delta function. Some researchers \cite{HajiKeyimYunus,Mijit2014} confused ``the reliability models with Dirac-delta functions in the boundary and initial conditions" with ``the reliability models without Dirac-delta functions in the boundary and initial conditions". \cite{HajiKeyimYunus,Mijit2014} applied $C_0-$semigroup theory to study well-posedness of two  reliability models with Dirac-delta functions and mistakenly believed ``they got the existence and uniqueness of the time-dependent solution". In this paper, we study the reliability model with Dirac-delta functions in the boundary and initial conditions in Li and Cao \cite{LiCao} and introduce our  results. In special case, by using the Laplace transform we deduce the explicit expression of the time-dependent solution of the model which contains Dirac-delta functions, thus we show that $C_0-$semigroup theory is not suitable for this model. In general case, we determine the Laplace transform of the time-dependent solution of the model, but we could not obtain the Laplace inverse transform and therefore we leave it as an open problem. Next, we convert the model into an abstract Cauchy problem whose underlying operator is an evolution family, thus we explain  difficulties of the model and leave several open problems in reliability theory.

In 1994, Li and Cao \cite{LiCao} discussed a stochastic scheduling on an unreliable machine with general up-times and general set-up times and established
the following mathematical model by using the supplementary variable technique:
\begin{align}
&\frac{\partial p_0(x,t)}{\partial t}+\frac{\partial p_0(x,t)}{\partial x}=-(\lambda(x)+\mu(x))p_0(x,t),\tag{1.1}\label{1.1}\\
&\frac{\partial p_1(x,t)}{\partial t}+\frac{\partial p_1(x,t)}{\partial x}=-\eta(x)p_1(x,t),\tag {1.2}\label{1.2}\\
&p_0(0,t)=\delta(t)+\int^{\infty}_0p_1(x,t)\eta(x)\mathrm{d}x,\tag {1.3}\label{1.3}\\
&p_1(0,t)=\int^{\infty}_0p_0(x,t)\lambda(x)\mathrm{d}x,\tag {1.4}\label{1.4}\\
&p_0(x,0)=\delta(x),\quad p_1(x,0)=0.\tag {1.5}\label{1.5}
\end{align}
Where $(x,t)\in [0,\infty)\times[0,\infty);$ $p_0(x,t)$ is the probability that at time $t,$ the machine is processing job $i$ with elapsed processing time $x;$
$p_1(x,t)$ represents the probability that at time $t,$ the machine has been subject to breakdown and is being repaired with elapsed repair time $x;$
$\mu(x)$ is the random processing rate of job $i$ at time $x$ satisfying
$$
\mu(x)\ge 0,\quad \int^{\infty}_0\mu(x)\mathrm{d}x=\infty.
$$
$\lambda(x)$ is the random up-time rate of job $i$ at time $x$ satisfying
$$
\lambda(x)\ge 0,\quad \int^{\infty}_0\lambda(x)\mathrm{d}x=\infty.
$$
$\eta(x)$ is the random repair rate of job $i$ at time $x$ satisfying
$$
\eta(x)\ge 0,\quad \int^{\infty}_0\eta(x)\mathrm{d}x=\infty.
$$
$\delta(\cdot)$ is the Dirac delta function satisfying
$$
\delta(x)=\begin{cases}
0 &x\not=0\\
\infty &x=0
\end{cases}
$$
By using the Laplace transform of the time-dependent solution of the above equation system (\ref{1.1})$\sim$(\ref{1.5}),
Li and Cao \cite{LiCao} considered optimal policy of preemptive repeat model and preemptive resume model.
So far, any other results of this model have not been found in the literature.

In this paper, we study the above system of equations (\ref{1.1})$\sim$(\ref{1.5}).  Firstly, we determine the explicit expression of the time-dependent solution of the above equations (\ref{1.1})$\sim$(\ref{1.5}) when $\lambda(x)=\lambda\;(\mbox{constant}),$ $\mu(x)=\mu\;(\mbox{constant}),$ $\eta(x)=\eta\;(\mbox{constant})$ and deduce its asymptotic behavior. Our result implies that $C_0-$semigroup theory is not suitable for (\ref{1.1})$\sim$(\ref{1.5}).
Next, we consider the case that  $\lambda(x),$ $\mu(x),$ $\eta(x)$ are non-constants and give the Laplace transform of the time-dependent solution of (\ref{1.1})$\sim$(\ref{1.5}). But we can not obtain the Laplace inverse transform, that is to say, we could not give the explicit expression of the time-dependent solution for general case. Lastly, by choosing a state space and an operator family we convert the equation system (\ref{1.1})$\sim$(\ref{1.5}) into an abstract Cauchy problem and therefore point out its differences from other Cauchy problems. We will finish with some open problems in reliability theory.

\section{Main Results}\label{sec2}

When $\lambda(x)=\lambda, \mu(x)=\mu, \eta(x)=\eta,$ the above system of  equations (\ref{1.1})$\sim$(\ref{1.5}) becomes
\begin{align}
&\frac{\partial p_0(x,t)}{\partial t}+\frac{\partial p_0(x,t)}{\partial x}=-(\lambda+\mu)p_0(x,t),\tag{2.1}\label{2.1}\\
&\frac{\partial p_1(x,t)}{\partial t}+\frac{\partial p_1(x,t)}{\partial x}=-\eta p_1(x,t),\tag {2.2}\label{2.2}\\
&p_0(0,t)=\delta(t)+\eta\int^{\infty}_0p_1(x,t)\mathrm{d}x,\tag {2.3}\label{2.3}\\
&p_1(0,t)=\lambda\int^{\infty}_0p_0(x,t)\mathrm{d}x,\tag {2.4}\label{2.4}\\
&p_0(x,0)=\delta(x),\quad p_1(x,0)=0.\tag {2.5}\label{2.5}
\end{align}

\begin{theorem}\label{geni}
If
\begin{align*}
&\lambda+\mu>\eta,\;\frac{\mathrm{d}}{\mathrm{d}t}\int^{\infty}_0p_k(x,t)e^{-sx}\mathrm{d}x=\int^{\infty}_0\frac{\partial p_k(x,t)}{\partial t}e^{-sx}\mathrm{d}x,\; k=0,1,
\end{align*}
then the time-dependent solution of the system (\ref{2.1})$\sim$(\ref{2.5}) is
\begin{align*}
p_0(x,t)&=2e^{-(\lambda+\mu)t}\delta(x-t)+\frac{\lambda\eta l_1}{z_1+\eta}e^{z_1t}e^{-(\lambda+\mu-\eta)x}\\
&\quad-\frac{\lambda\eta l_1}{z_1+\eta}e^{-\eta t}e^{-(\lambda+\mu-\eta)x}\\
&\quad+\frac{\lambda\eta l_2}{z_2+\eta}e^{z_2t}e^{-(\lambda+\mu-\eta)x}-\frac{\lambda\eta l_2}{z_2+\eta}e^{-\eta t}e^{-(\lambda+\mu-\eta)x}\\
&\quad-\frac{\lambda\eta l_1}{z_1+\eta}e^{z_1t}e^{-(\lambda+\mu-\eta)x}
+\frac{\lambda\eta l_1}{z_1+\eta}e^{z_1t}e^{-(\lambda+\mu+z_1)x}\\
&\quad+\frac{\lambda\eta l_1}{z_1+\eta}e^{-(\lambda+\mu)t}\int^x_0e^{-(\lambda+\mu-\eta)(x-z)}\delta(z-t)\mathrm{d}z\\
&\quad-\frac{\lambda\eta l_1}{z_1+\eta}e^{-(\lambda+\mu)t}\int^x_0e^{-(\lambda+\mu+z_1)(x-z)}\delta(z-t)\mathrm{d}z\\
&\quad-\frac{\lambda\eta l_2}{z_2+\eta}e^{z_2t}e^{-(\lambda+\mu-\eta)x}+\frac{\lambda\eta l_2}{z_2+\eta}e^{z_2t}e^{-(\lambda+\mu+z_2)x}\\
&\quad+\frac{\lambda\eta l_2}{z_2+\eta}e^{-(\lambda+\mu)t}\int^x_0e^{-(\lambda+\mu-\eta)(x-z)}\delta(z-t)\mathrm{d}z\\
&\quad-\frac{\lambda\eta l_2}{z_2+\eta}e^{-(\lambda+\mu)t}\int^x_0e^{-(\lambda+\mu+z_2)(x-z)}\delta(z-t)\mathrm{d}z,\\
p_1(x,t)&=2\lambda e^{(\lambda+\mu-\eta)x}e^{-(\lambda+\mu)t}-2\lambda e^{-\eta t}\int^x_0e^{(\lambda+\mu-\eta)(x-z)}\delta(z-t)\mathrm{d}z\\
&\quad+\frac{\lambda\eta h_1}{\lambda+\mu+z_1}e^{z_1t}e^{(\lambda+\mu-\eta)x}\\
&\quad-\frac{\lambda\eta h_1}{\lambda+\mu+z_1}e^{-(\lambda+\mu)t}e^{(\lambda+\mu-\eta)x}\\
&\quad+\frac{\lambda\eta h_2}{\lambda+\mu+z_2}e^{z_2t}e^{(\lambda+\mu-\eta)x}\\
&\quad-\frac{\lambda\eta h_2}{\lambda+\mu+z_2}e^{-(\lambda+\mu)t}e^{(\lambda+\mu-\eta)x}\\
&\quad-\frac{\lambda\eta h_1}{\lambda+\mu+z_1}e^{z_1t}e^{(\lambda+\mu-\eta)x}\\
&\quad+\frac{\lambda\eta h_1}{\lambda+\mu+z_1}e^{z_1t}e^{-(\eta+z_1)x}\\
&\quad+\frac{\lambda\eta h_1}{\lambda+\mu+z_1}e^{-\eta t}\int^x_0e^{(\lambda+\mu-\eta)(x-z)}\delta(z-t)\mathrm{d}z\\
&\quad-\frac{\lambda\eta h_1}{\lambda+\mu+z_1}e^{-\eta t}\int^x_0e^{-(\eta+z_1)(x-z)}\delta(z-t)\mathrm{d}z\\
&\quad-\frac{\lambda\eta h_2}{\lambda+\mu+z_2}e^{z_2t}e^{(\lambda+\mu-\eta)x}\\
&\quad-\frac{\lambda\eta h_2}{\lambda+\mu+z_2}e^{z_2t}e^{-(\eta+z_2)x}\\
&\quad+\frac{\lambda\eta h_2}{\lambda+\mu+z_2}e^{-\eta t}\int^x_0e^{(\lambda+\mu-\eta)(x-z)}\delta(z-t)\mathrm{d}z\\
&\quad-\frac{\lambda\eta h_2}{\lambda+\mu+z_2}e^{-\eta t}\int^x_0e^{-(\eta+z_2)(x-z)}\delta(z-t)\mathrm{d}z.
\end{align*}
where
\begin{align*}
z_1&=\frac{-(\lambda+\mu+\eta)+\sqrt{(\lambda+\mu+\eta)^2-4\mu\eta}}{2},\\
z_2&=\frac{-(\lambda+\mu+\eta)-\sqrt{(\lambda+\mu+\eta)^2-4\mu\eta}}{2},\\
l_1&=\frac{2(z_1-\eta)}{z_1-z_2},\quad l_2=-\frac{2(z_2-\eta)}{z_1-z_2},\\
h_1&=\frac{2\lambda}{z_1-z_2},\quad h_2=-\frac{2\lambda}{z_1-z_2}.
\end{align*}
Hence, by noting $\lambda>0, \mu>0, \eta>0, \Re z_1<0, \Re z_2<0$ it is easy to see that
$$
\lim_{t\to\infty}p_0(x,t)=0,\quad \lim_{t\to\infty}p_1(x,t)=0.
$$
\end{theorem}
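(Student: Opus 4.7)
My plan is to compute the time-dependent solution by a double Laplace transform --- first in $x$, then in $t$ --- reducing to a $2\times 2$ algebraic system for the boundary traces $\hat p_k(0,z)$ and then inverting. Setting $\tilde p_k(s,t) := \int_0^\infty e^{-sx} p_k(x,t)\,dx$, the hypothesis that $d/dt$ may be pulled under the $x$-integral, together with the integration-by-parts identity $\int_0^\infty e^{-sx}\partial_x p_k\,dx = -p_k(0,t) + s\tilde p_k(s,t)$, converts (\ref{2.1})--(\ref{2.2}) into the ODE pair $\partial_t\tilde p_0 + (s+\lambda+\mu)\tilde p_0 = p_0(0,t)$ and $\partial_t\tilde p_1 + (s+\eta)\tilde p_1 = p_1(0,t)$, with initial data $\tilde p_0(s,0)=1$, $\tilde p_1(s,0)=0$ from (\ref{2.5}) and driving terms $p_0(0,t)=\delta(t)+\eta\tilde p_1(0,t)$, $p_1(0,t)=\lambda\tilde p_0(0,t)$ from (\ref{2.3})--(\ref{2.4}).

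A second transform $\hat p_k(s,z) := \int_0^\infty e^{-zt}\tilde p_k(s,t)\,dt$ then yields the algebraic relations $(z+s+\lambda+\mu)\hat p_0(s,z) = 2 + \eta\hat p_1(0,z)$ and $(z+s+\eta)\hat p_1(s,z) = \lambda\hat p_0(0,z)$. The constant $2$ on the right is the sum of two independent unit contributions --- the initial value $\tilde p_0(s,0)=1$ and the Laplace transform of the boundary Dirac $\delta(t)$ --- and is the algebraic origin of the factor $2$ in front of the propagating Dirac $e^{-(\lambda+\mu)t}\delta(x-t)$ asserted by the theorem. Evaluating both relations at $s=0$ closes a $2\times 2$ linear system whose determinant is $(z+\lambda+\mu)(z+\eta)-\lambda\eta = z^2+(\lambda+\mu+\eta)z+\mu\eta = (z-z_1)(z-z_2)$, giving $\hat p_0(0,z) = 2(z+\eta)/[(z-z_1)(z-z_2)]$ and $\hat p_1(0,z) = 2\lambda/[(z-z_1)(z-z_2)]$; substituting back provides the joint transform $\hat p_k(s,z)$ for every $(s,z)$.

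Inversion proceeds in two steps. Partial fractions in $z$ produce exponentials $e^{z_1 t}$, $e^{z_2 t}$, $e^{-(s+\lambda+\mu)t}$, $e^{-(s+\eta)t}$ whose coefficients are rational in $s$; a further partial-fraction decomposition in $s$, combined with the shift rule $\mathcal L_s^{-1}[g(s)e^{-st}](x) = \mathcal L_s^{-1}[g](x-t)\,H(x-t)$, recovers the explicit expressions, the identity $(z_i+\eta)(z_i+\lambda+\mu)=\lambda\eta$ (which follows from $z_i^2+(\lambda+\mu+\eta)z_i+\mu\eta=0$) collapsing the residues into the compact shapes $\lambda\eta l_j/(z_j+\eta)$ and $\lambda\eta h_j/(\lambda+\mu+z_j)$ of the statement, and the Heaviside-truncated shifts accounting for the convolutions $\int_0^x e^{\cdots(x-z)}\delta(z-t)\,dz$. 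The asymptotic statement then follows from Vieta: the nonnegative discriminant $(\lambda+\mu+\eta)^2 - 4\mu\eta$ together with $z_1+z_2=-(\lambda+\mu+\eta)<0$ and $z_1 z_2 = \mu\eta>0$ forces $\Re z_1, \Re z_2<0$, so every exponential in $t$ decays and, for any fixed $x$, the Dirac-supported terms vanish once $t>x$.

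The main obstacle will be the distributional bookkeeping of the two Dirac sources concentrated at $(x,t)=(0,0)$: keeping the contributions of $\delta(x)$ in (\ref{2.5}) and $\delta(t)$ in (\ref{2.3}) separate through both Laplace transforms, correctly attributing the resulting $2$ (and not $1$) to the travelling delta along the characteristic $x=t$, and ensuring that the subsequent inverse-transform manipulations remain valid distributionally --- precisely the place where the classical $C_0$-semigroup argument breaks down and which motivates the author's remark that the equations must be read distributionally throughout.
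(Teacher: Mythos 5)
Your proposal is correct and follows essentially the same route as the paper: a Laplace transform in $x$ reduces the PDEs to ODEs in $t$ driven by the boundary traces, a second transform in $t$ closes a $2\times 2$ system with determinant $(z+\lambda+\mu)(z+\eta)-\lambda\eta=(z-z_1)(z-z_2)$ and yields $\hat p_0(0,z)=2(z+\eta)/[(z-z_1)(z-z_2)]$, $\hat p_1(0,z)=2\lambda/[(z-z_1)(z-z_2)]$, and the explicit solution is then recovered by partial fractions and the shift rule exactly as in the paper, the only difference being that the paper solves the $t$-ODE by variation of constants first and transforms the resulting Volterra equation afterwards. Note that your residue computation gives $l_1=2(z_1+\eta)/(z_1-z_2)$, which matches the paper's own equation (2.20) and confirms that the ``$z_1-\eta$'' in the theorem statement is a typo.
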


\begin{proof}
By applying the Laplace transform with respect to $x$ to (\ref{2.1}) and (\ref{2.2}), and its properties
(see \cite{Arendt2001,Wazwaz}):
\begin{align}
\widehat{\frac{\mathrm{d}f}{\mathrm{d}x}}(s)&=s\hat{f}(s)-f(0),\tag{2.6}\label{2.6}\\
\widehat{(f+g)}&=\hat{f}+\hat{g},\;\widehat{af}=a\hat{f},\; a\in\mathbb{C},\tag {2.7}\label{2.7}
\end{align}
 we have
\begin{align}
&\frac{\partial\hat{p}_0(s,t)}{\partial t}+s\hat{p}_0(s,t)-p_0(0,t)=-(\lambda+\mu)\hat{p}_0(s,t)\Longrightarrow\notag\\
&\frac{\partial\hat{p}_0(s,t)}{\partial t}=-(\lambda+\mu+s)\hat{p}_0(s,t)+p_0(0,t),\tag {2.8}\label{2.8}\\
&\frac{\partial\hat{p}_1(s,t)}{\partial t}+s\hat{p}_1(s,t)-p_1(0,t)=-\eta\hat{p}_1(s,t)\Longrightarrow\notag\\
&\frac{\partial\hat{p}_1(s,t)}{\partial t}=-(\eta+s)\hat{p}_1(s,t)+p_1(0,t).\tag {2.9}\label{2.9}
\end{align}
By inserting (\ref{2.3}) into (\ref{2.8}) and (\ref{2.4}) into (\ref{2.9}) and using (see \cite{Arendt2001,Wazwaz})
$$
\int^{\infty}_0f(x)\mathrm{d}x=\hat{f}(0)
$$
we deduce
\begin{align}
\frac{\partial\hat{p}_0(s,t)}{\partial t}&=-(\lambda+\mu+s)\hat{p}_0(s,t)+\delta(t)+\eta\int^{\infty}_0p_1(x,t)\mathrm{d}x\notag\\
&=-(\lambda+\mu+s)\hat{p}_0(s,t)+\delta(t)+\eta\hat{p}_1(0,t)\Longrightarrow\notag\\
\hat{p}_0(s,t)&=\hat{p}_0(s,0)e^{-(\lambda+\mu+s)t}\notag\\
&\quad+e^{-(\lambda+\mu+s)t}\int^t_0[\delta(y)+\eta\hat{p}_1(0,y)]e^{(\lambda+\mu+s)y}\mathrm{d}y\notag\\
&=\hat{p}_0(s,0)e^{-(\lambda+\mu+s)t}+e^{-(\lambda+\mu+s)t}\int^t_0\delta(y)e^{(\lambda+\mu+s)y}\mathrm{d}y\notag\\
&\quad+\eta e^{-(\lambda+\mu+s)t}\int^t_0\hat{p}_1(0,y)e^{(\lambda+\mu+s)y}\mathrm{d}y.\tag {2.10}\label{2.10}\\
\frac{\partial\hat{p}_1(s,t)}{\partial t}&=-(\eta+s)\hat{p}_1(s,t)+\lambda\int^{\infty}_0p_0(x,t)\mathrm{d}x\notag\\
&=-(\eta+s)\hat{p}_1(s,t)+\lambda\hat{p}_0(0,t)\Longrightarrow\notag\\
\hat{p}_1(s,t)&=\hat{p}_1(s,0)e^{-(\eta+s)t}+\lambda e^{-(\eta+s)t}\int^t_0\hat{p}_0(0,y)e^{(\eta+s)y}\mathrm{d}y.\tag {2.11}\label{2.11}
\end{align}
(\ref{2.5}) gives
\begin{align}
\hat{p}_0(s,0)&=\int^{\infty}_0e^{-sx}p_0(x,0)\mathrm{d}x=\int^{\infty}_0e^{-sx}\delta(x)\mathrm{d}x=1,\tag {2.12}\label{2.12}\\
\hat{p}_1(s,0)&=\int^{\infty}_0e^{-sx}p_1(x,0)\mathrm{d}x=\int^{\infty}_00\mathrm{d}x=0.\tag {2.13}\label{2.13}
\end{align}
By combining (\ref{2.13}) with (\ref{2.11}) and (\ref{2.12}) with (\ref{2.10}) we obtain
\begin{align}
\hat{p}_0(s,t)&=e^{-(\lambda+\mu+s)t}+e^{-(\lambda+\mu+s)t}\int^t_0\delta(y)e^{(\lambda+\mu+s)y}\mathrm{d}y\notag\\
&\quad+\eta e^{-(\lambda+\mu+s)t}\int^t_0\hat{p}_1(0,y)e^{(\lambda+\mu+s)y}\mathrm{d}y,\tag {2.14}\label{2.14}\\
\hat{p}_1(s,t)&=\lambda e^{-(\eta+s)t}\int^t_0\hat{p}_0(0,y)e^{(\eta+s)y}\mathrm{d}y.\tag {2.15}\label{2.15}
\end{align}
By substituting (\ref{2.15}) into (\ref{2.14}) and using the Fubini theorem we deduce
\begin{align}
\hat{p}_0(s,t)&=e^{-(\lambda+\mu+s)t}+e^{-(\lambda+\mu+s)t}\int^t_0\delta(y)e^{(\lambda+\mu+s)y}\mathrm{d}y\notag\\
&\quad+\eta e^{-(\lambda+\mu+s)t}\int^t_0e^{(\lambda+\mu+s)y}\left[\lambda e^{-\eta y}\int^y_0\hat{p}_0(0,\tau)e^{\eta\tau}d\tau\right]\mathrm{d}y\notag\\
&=e^{-(\lambda+\mu+s)t}+e^{-(\lambda+\mu+s)t}\int^t_0\delta(y)e^{(\lambda+\mu+s)y}\mathrm{d}y\notag\\
&\quad+\lambda\eta e^{-(\lambda+\mu+s)t}\int^t_0e^{(\lambda+\mu+s-\eta)y}\int^y_0\hat{p}_0(0,\tau)e^{\eta\tau}d\tau\mathrm{d}y\notag\\
&=e^{-(\lambda+\mu+s)t}+e^{-(\lambda+\mu+s)t}\int^t_0\delta(y)e^{(\lambda+\mu+s)y}\mathrm{d}y\notag\\
&\quad+\lambda\eta e^{-(\lambda+\mu+s)t}\int^t_0\hat{p}_0(0,\tau)e^{\eta\tau}\int^t_{\tau}e^{(\lambda+\mu+s-\eta)y}\mathrm{d}y\mathrm{d}\tau\notag\\
&=e^{-(\lambda+\mu+s)t}+e^{-(\lambda+\mu+s)t}\int^t_0\delta(y)e^{(\lambda+\mu+s)y}\mathrm{d}y\notag\\
&\quad+\lambda\eta e^{-(\lambda+\mu+s)t}\int^t_0\hat{p}_0(0,\tau)e^{\eta\tau}
\frac{e^{(\lambda+\mu+s-\eta)y}}{\lambda+\mu+s-\eta}\Big\vert^{y=t}_{y=\tau}\mathrm{d}\tau\notag\\
&=e^{-(\lambda+\mu+s)t}+e^{-(\lambda+\mu+s)t}\int^t_0\delta(y)e^{(\lambda+\mu+s)y}\mathrm{d}y\notag\\
&\quad+\frac{\lambda\eta}{\lambda+\mu+s-\eta} e^{-(\lambda+\mu+s)t}\notag\\
&\quad\times\int^t_0\hat{p}_0(0,\tau)e^{\eta\tau}
\left(e^{(\lambda+\mu+s-\eta)t}-e^{(\lambda+\mu+s-\eta)\tau}\right)\mathrm{d}\tau\notag\\
&=e^{-(\lambda+\mu+s)t}+e^{-(\lambda+\mu+s)t}\int^t_0\delta(y)e^{(\lambda+\mu+s)y}\mathrm{d}y\notag\\
&\quad+\frac{\lambda\eta}{\lambda+\mu+s-\eta}e^{-(\lambda+\mu+s)t}\int^t_0\hat{p}_0(0,\tau)e^{\eta\tau}e^{(\lambda+\mu+s-\eta)t}\mathrm{d}\tau\notag\\
&\quad-\frac{\lambda\eta}{\lambda+\mu+s-\eta}e^{-(\lambda+\mu+s)t}\int^t_0\hat{p}_0(0,\tau)e^{(\lambda+\mu+s)\tau}\mathrm{d}\tau\notag\\
&=e^{-(\lambda+\mu+s)t}+e^{-(\lambda+\mu+s)t}\int^t_0\delta(y)e^{(\lambda+\mu+s)y}\mathrm{d}y\notag\\
&\quad+\frac{\lambda\eta}{\lambda+\mu+s-\eta}e^{-\eta t}\int^t_0\hat{p}_0(0,\tau)e^{\eta\tau}\mathrm{d}\tau\notag\\
&\quad-\frac{\lambda\eta}{\lambda+\mu+s-\eta}e^{-(\lambda+\mu+s)t}\int^t_0\hat{p}_0(0,\tau)e^{(\lambda+\mu+s)\tau}\mathrm{d}\tau.\tag {2.16}\label{2.16}
\end{align}
By combining (\ref{2.16}) with the Laplace transform and the Fubini theorem we derive when $\Re(\lambda+\mu+z)>0,\;\Re(\eta+z)>0$ (here $\Re(\eta+z)$ means real part of $\eta+z$), noting $\int^{\infty}_0\delta(y)e^{-zy}dy=1,$
\begin{align}
\hat{p}_0(0,t)&=e^{-(\lambda+\mu)t}+e^{-(\lambda+\mu)t}\int^t_0\delta(y)e^{(\lambda+\mu)y}\mathrm{d}y\notag\\
&\quad+\frac{\lambda\eta}{\lambda+\mu-\eta}e^{-\eta t}\int^t_0\hat{p}_0(0,\tau)e^{\eta\tau}\mathrm{d}\tau\notag\\
&\quad-\frac{\lambda\eta}{\lambda+\mu-\eta}e^{-(\lambda+\mu)t}\int^t_0\hat{p}_0(0,\tau)e^{(\lambda+\mu)\tau}\mathrm{d}\tau\notag\\
&\Longrightarrow\notag\\
\int^{\infty}_0e^{-tz}\hat{p}_0(0,t)\mathrm{d}t&=\int^{\infty}_0e^{-tz}e^{-(\lambda+\mu)t}\mathrm{d}t\notag\\
&\quad+\int^{\infty}_0e^{-tz}e^{-(\lambda+\mu)t}\int^t_0\delta(y)e^{(\lambda+\mu)y}\mathrm{d}y\mathrm{d}t\notag\\
&\quad+\frac{\lambda\eta}{\lambda+\mu-\eta}\int^{\infty}_0e^{-tz}e^{-\eta t}\int^t_0\hat{p}_0(0,\tau)e^{\eta\tau}\mathrm{d}\tau\mathrm{d}t\notag\\
&\quad-\frac{\lambda\eta}{\lambda+\mu-\eta}\int^{\infty}_0e^{-tz}e^{-(\lambda+\mu)t}\int^t_0\hat{p}_0(0,\tau)e^{(\lambda+\mu)\tau}\mathrm{d}\tau\mathrm{d}t\notag\\
&=\frac{-1}{\lambda+\mu+z}e^{-(\lambda+\mu+z)t}\Big\vert^{t=\infty}_{t=0}\notag\\
&\quad+\int^{\infty}_0\delta(y)e^{(\lambda+\mu)y}\int^{\infty}_ye^{-(\lambda+\mu+z)t}\mathrm{d}t\mathrm{d}y\notag\\
&\quad+\frac{\lambda\eta}{\lambda+\mu-\eta}\int^{\infty}_0\hat{p}_0(0,\tau)e^{\eta\tau}\int^{\infty}_{\tau}e^{-(\eta+z)t}\mathrm{d}t\mathrm{d}\tau\notag\\
&\quad-\frac{\lambda\eta}{\lambda+\mu-\eta}\int^{\infty}_0\hat{p}_0(0,\tau)e^{(\lambda+\mu)\tau}\int^{\infty}_{\tau}e^{-(\lambda+\mu+z)t}\mathrm{d}t\mathrm{d}\tau\notag\\
&=\frac{1}{\lambda+\mu+z}+\int^{\infty}_0\delta(y)e^{(\lambda+\mu)y}\frac{e^{-(\lambda+\mu+z)y}}{\lambda+\mu+z}\mathrm{d}y\notag\\
&\quad+\frac{\lambda\eta}{(\lambda+\mu-\eta)(\eta+z)}\int^{\infty}_0e^{-\tau z}\hat{p}_0(0,\tau)\mathrm{d}\tau\notag\\
&\quad-\frac{\lambda\eta}{(\lambda+\mu-\eta)(\lambda+\mu+z)}\int^{\infty}_0e^{-\tau z}\hat{p}_0(0,\tau)\mathrm{d}\tau\notag\\
&=\frac{1}{\lambda+\mu+z}+\frac{1}{\lambda+\mu+z}\int^{\infty}_0\delta(y)e^{-zy}\mathrm{d}y\notag\\
&\quad+\frac{\lambda\eta}{(\lambda+\mu-\eta)(\eta+z)}\int^{\infty}_0e^{-\tau z}\hat{p}_0(0,\tau)\mathrm{d}\tau\notag\\
&\quad-\frac{\lambda\eta}{(\lambda+\mu-\eta)(\lambda+\mu+z)}\int^{\infty}_0e^{-\tau z}\hat{p}_0(0,\tau)\mathrm{d}\tau\notag\\
&=\frac{2}{\lambda+\mu+z}\notag\\
&\quad+\frac{\lambda\eta}{(\lambda+\mu-\eta)(\eta+z)}\int^{\infty}_0e^{-\tau z}\hat{p}_0(0,\tau)\mathrm{d}\tau\notag\\
&\quad-\frac{\lambda\eta}{(\lambda+\mu-\eta)(\lambda+\mu+z)}\int^{\infty}_0e^{-\tau z}\hat{p}_0(0,\tau)\mathrm{d}\tau\notag\\
&\Longrightarrow\notag\\
&\Big[1-\frac{\lambda\eta}{(\lambda+\mu-\eta)(z+\eta)}\notag\\
&\quad+\frac{\lambda\eta}{(\lambda+\mu-\eta)(\lambda+\mu+z)}\Big]\int^{\infty}_0e^{-tz}\hat{p}_0(0,t)\mathrm{d}t\notag\\
&=\frac{2}{\lambda+\mu+z}\notag\\
&\Longrightarrow\notag\\
&\Big[1-\frac{\lambda\eta}{(\lambda+\mu+z)(z+\eta)}\Big]\int^{\infty}_0e^{-tz}\hat{p}_0(0,t)\mathrm{d}t=\frac{2}{\lambda+\mu+z}\notag\\
&\Longrightarrow\notag\\
&\frac{(\lambda+\mu+z)(z+\eta)-\lambda\eta}{(\lambda+\mu+z)(z+\eta)}\int^{\infty}_0e^{-tz}\hat{p}_0(0,t)\mathrm{d}t=\frac{2}{\lambda+\mu+z}\notag\\
&\Longrightarrow\notag\\
\int^{\infty}_0e^{-tz}\hat{p}_0(0,t)\mathrm{d}t&=\frac{2(z+\eta)}{(\lambda+\mu+z)(z+\eta)-\lambda\eta}\notag\\
&=\frac{2(z+\eta)}{z^2+(\lambda+\mu+\eta)z+\mu\eta}\notag\\
&=\frac{l_1}{z-z_1}+\frac{l_2}{z-z_2},\tag {2.17}\label{2.17}
\end{align}
where
\begin{align}
z_1&=\frac{-(\lambda+\mu+\eta)+\sqrt{(\lambda+\mu+\eta)^2-4\mu\eta}}{2},\tag{2.18}\label{2.18}\\
z_2&=\frac{-(\lambda+\mu+\eta)-\sqrt{(\lambda+\mu+\eta)^2-4\mu\eta}}{2},\tag {2.19}\label{2.19}\\
l_1&=\frac{2(z_1+\eta)}{z_1-z_2},\quad l_2=-\frac{2(z_2+\eta)}{z_1-z_2}.\tag {2.20}\label{2.20}
\end{align}
Since
\begin{align*}
&\int^{\infty}_0e^{-tz}e^{z_1t}\mathrm{d}t=\frac{1}{z-z_1},\;\Re(z-z_1)>0;\\
&\int^{\infty}_0e^{-tz}e^{z_2t}\mathrm{d}t=\frac{1}{z-z_2},\; \Re(z-z_2)>0,
\end{align*}
by (\ref{2.17}) we determine the Laplace inverse transform
\begin{align}
&\hat{p}_0(0,t)=l_1e^{z_1t}+l_2e^{z_2t}.\tag {2.21}\label{2.21}
\end{align}
By inserting (\ref{2.21}) into (\ref{2.16}) it gives
\begin{align}
\hat{p}_0(s,t)&=e^{-(\lambda+\mu+s)t}+e^{-(\lambda+\mu+s)t}\int^t_0\delta(y)e^{(\lambda+\mu+s)y}\mathrm{d}y\notag\\
&\quad+\frac{\lambda\eta}{\lambda+\mu+s-\eta}e^{-\eta t}\int^t_0\left[l_1e^{z_1\tau}+l_2e^{z_2\tau}\right]e^{\eta\tau}\mathrm{d}\tau\notag\\
&\quad-\frac{\lambda\eta}{\lambda+\mu+s-\eta}e^{-(\lambda+\mu+s)t}\int^t_0\left[l_1e^{z_1\tau}+l_2e^{z_2\tau}\right]e^{(\lambda+\mu+s)\tau}\mathrm{d}\tau\notag\\
&=e^{-(\lambda+\mu+s)t}+e^{-(\lambda+\mu+s)t}\int^t_0\delta(y)e^{(\lambda+\mu+s)y}\mathrm{d}y\notag\\
&\quad+\frac{\lambda\eta}{\lambda+\mu+s-\eta}e^{-\eta t}\int^t_0\left[l_1e^{(z_1+\eta)\tau}+l_2e^{(z_2+\eta)\tau}\right]\mathrm{d}\tau\notag\\
&\quad-\frac{\lambda\eta}{\lambda+\mu+s-\eta}e^{-(\lambda+\mu+s)t}\int^t_0\left[l_1e^{(\lambda+\mu+z_1+s)\tau}+l_2e^{(\lambda+\mu+z_2+s)\tau}\right]\mathrm{d}\tau\notag\\
&=e^{-(\lambda+\mu+s)t}+e^{-(\lambda+\mu+s)t}\int^t_0\delta(y)e^{(\lambda+\mu+s)y}\mathrm{d}y\notag\\
&\quad+\frac{\lambda\eta}{\lambda+\mu+s-\eta}e^{-\eta t}\left[\frac{l_1}{z_1+\eta}e^{(z_1+\eta)\tau}\Big\vert^{\tau=t}_{\tau=0}
+\frac{l_2}{z_2+\eta}e^{(z_2+\eta)\tau}\Big\vert^{\tau=t}_{\tau=0}\right]\notag\\
&\quad-\frac{\lambda\eta}{\lambda+\mu+s-\eta}e^{-(\lambda+\mu+s)t}\bigg[\frac{l_1}{\lambda+\mu+z_1+s}e^{(\lambda+\mu+z_1+s)\tau}\Big\vert^{\tau=t}_{\tau=0}\notag\\
&\quad+\frac{l_2}{\lambda+\mu+z_2+s}e^{(\lambda+\mu+z_2+s)\tau}\Big\vert^{\tau=t}_{\tau=0}\bigg]\notag\\
&=e^{-(\lambda+\mu+s)t}+e^{-(\lambda+\mu+s)t}\int^t_0\delta(y)e^{(\lambda+\mu+s)y}\mathrm{d}y\notag\\
&\quad+\frac{\lambda\eta}{\lambda+\mu+s-\eta}e^{-\eta t}\bigg\{\frac{l_1}{z_1+\eta}\left[e^{(z_1+\eta)t}-1\right]\notag\\
&\quad+\frac{l_2}{z_2+\eta}\left[e^{(z_2+\eta)t}-1\right]\bigg\}\notag\\
&\quad-\frac{\lambda\eta}{\lambda+\mu+s-\eta}e^{-(\lambda+\mu+s)t}\bigg\{\frac{l_1}{\lambda+\mu+z_1+s}\left[e^{(\lambda+\mu+z_1+s)t}-1\right]\notag\\
&\quad+\frac{l_2}{\lambda+\mu+z_2+s}\left[e^{(\lambda+\mu+z_2+s)t}-1\right]\bigg\}\notag\\
&=e^{-(\lambda+\mu+s)t}+e^{-(\lambda+\mu+s)t}\int^t_0\delta(y)e^{(\lambda+\mu+s)y}\mathrm{d}y\notag\\
&\quad+\frac{\lambda\eta l_1}{\lambda+\mu+s-\eta}\frac{1}{z_1+\eta}e^{z_1t}\notag\\
&\quad-\frac{\lambda\eta l_1}{\lambda+\mu+s-\eta}\frac{1}{z_1+\eta}e^{-\eta t}+\frac{\lambda\eta l_2}{\lambda+\mu+s-\eta}\frac{1}{z_2+\eta}e^{z_2t}\notag\\
&\quad-\frac{\lambda\eta l_2}{\lambda+\mu+s-\eta}\frac{1}{z_2+\eta}e^{-\eta t}-\frac{\lambda\eta l_1}{\lambda+\mu+s-\eta}\frac{1}{\lambda+\mu+z_1+s}e^{z_1t}\notag\\
&\quad+\frac{\lambda\eta l_1}{\lambda+\mu+s-\eta}\frac{1}{\lambda+\mu+z_1+s}e^{-(\lambda+\mu+s)t}\notag\\
&\quad-\frac{\lambda\eta l_2}{\lambda+\mu+s-\eta}\frac{1}{\lambda+\mu+z_2+s}e^{z_2t}\notag\\
&\quad+\frac{\lambda\eta l_2}{\lambda+\mu+s-\eta}\frac{1}{\lambda+\mu+z_2+s}e^{-(\lambda+\mu+s)t}\notag\\
&=e^{-(\lambda+\mu+s)t}+e^{-(\lambda+\mu+s)t}\int^t_0\delta(y)e^{(\lambda+\mu+s)y}\mathrm{d}y\notag\\
&\quad+\frac{\lambda\eta l_1}{z_1+\eta}e^{z_1t}\frac{1}{\lambda+\mu-\eta+s}\notag\\
&\quad-\frac{\lambda\eta l_1}{z_1+\eta}e^{-\eta t}\frac{1}{\lambda+\mu-\eta+s}+\frac{\lambda\eta l_2}{z_2+\eta}e^{z_2t}\frac{1}{\lambda+\mu-\eta+s}\notag\\
&\quad-\frac{\lambda\eta l_2}{z_2+\eta}e^{-\eta t}\frac{1}{\lambda+\mu-\eta+s}\notag\\
&\quad-\frac{\lambda\eta l_1}{z_1+\eta}\left(\frac{1}{\lambda+\mu-\eta+s}-\frac{1}{\lambda+\mu+z_1+s}\right)e^{z_1t}\notag\\
&\quad+\frac{\lambda\eta l_1}{z_1+\eta}\left(\frac{1}{\lambda+\mu-\eta+s}-\frac{1}{\lambda+\mu+z_1+s}\right)e^{-(\lambda+\mu+s)t}\notag\\
&\quad-\frac{\lambda\eta l_2}{z_2+\eta}\left(\frac{1}{\lambda+\mu-\eta+s}-\frac{1}{\lambda+\mu+z_2+s}\right)e^{z_2t}\notag\\
&\quad+\frac{\lambda\eta l_2}{z_2+\eta}\left(\frac{1}{\lambda+\mu-\eta+s}-\frac{1}{\lambda+\mu+z_2+s}\right)e^{-(\lambda+\mu+s)t}\notag\\
&=e^{-(\lambda+\mu)t}e^{-st}+e^{-(\lambda+\mu+s)t}\int^t_0\delta(y)e^{(\lambda+\mu+s)y}dy\notag\\
&\quad+\frac{\lambda\eta l_1}{z_1+\eta}e^{z_1t}\frac{1}{\lambda+\mu-\eta+s}\notag\\
&\quad-\frac{\lambda\eta l_1}{z_1+\eta}e^{-\eta t}\frac{1}{\lambda+\mu-\eta+s}+\frac{\lambda\eta l_2}{z_2+\eta}e^{z_2t}\frac{1}{\lambda+\mu-\eta+s}\notag\\
&\quad-\frac{\lambda\eta l_2}{z_2+\eta}e^{-\eta t}\frac{1}{\lambda+\mu-\eta+s}\notag\\
&\quad-\frac{\lambda\eta l_1}{z_1+\eta}e^{z_1t}\frac{1}{\lambda+\mu-\eta+s}+\frac{\lambda\eta l_1}{z_1+\eta}e^{z_1t}\frac{1}{\lambda+\mu+z_1+s}\notag\\
&\quad+\frac{\lambda\eta l_1}{z_1+\eta}e^{-(\lambda+\mu)t}\frac{1}{\lambda+\mu-\eta+s}e^{-st}\notag\\
&\quad-\frac{\lambda\eta l_1}{z_1+\eta}e^{-(\lambda+\mu)t}
\frac{1}{\lambda+\mu+z_1+s}e^{-st}\notag\\
&\quad-\frac{\lambda\eta l_2}{z_2+\eta}e^{z_2t}\frac{1}{\lambda+\mu-\eta+s}+\frac{\lambda\eta l_2}{z_2+\eta}e^{z_2t}\frac{1}{\lambda+\mu+z_2+s}\notag\\
&\quad+\frac{\lambda\eta l_2}{z_2+\eta}e^{-(\lambda+\mu)t}\frac{1}{\lambda+\mu-\eta+s}e^{-st}\notag\\
&\quad-\frac{\lambda\eta l_2}{z_2+\eta}e^{-(\lambda+\mu)t}\frac{1}{\lambda+\mu+z_2+s}e^{-st}.\tag {2.22}\label{2.22}
\end{align}
By combining (\ref{2.22}) with
\begin{align*}
&\int^{\infty}_0e^{-sx}\delta(x-t)\mathrm{d}x=e^{-st},\;\int^{\infty}_0e^{-sx}e^{-ux}\mathrm{d}x=\frac{1}{u+s},\;\Re(u+s)>0,\tag {2.23}\label{2.23}\\
&\int^{\infty}_0e^{-tz}e^{-(\lambda+\mu+s)t}\int^t_0\delta(y)e^{(\lambda+\mu+s)y}\mathrm{d}y\mathrm{d}t\\
&=\int^{\infty}_0\delta(y)e^{(\lambda+\mu+s)y}\int^{\infty}_ye^{-(\lambda+\mu+s+z)t}\mathrm{d}t\mathrm{d}y\\
&=\frac{1}{\lambda+\mu+s+z}\int^{\infty}_0\delta(y)e^{-zy}\mathrm{d}y=\frac{1}{\lambda+\mu+s+z}\\
&\Rightarrow\\
&e^{-(\lambda+\mu+s)t}\int^t_0\delta(y)e^{(\lambda+\mu+s)y}\mathrm{d}y=e^{-(\lambda+\mu+s)t},\;\Re(\lambda+\mu+s+z)>0,\tag {2.24}\label{2.24}\\
&\int^{\infty}_0e^{-sx}\left(\int^x_0e^{-u(x-z)}\delta(z-t)\mathrm{d}z\right)\mathrm{d}x\\
&=\int^{\infty}_0\int^{\infty}_ze^{-sx}e^{-u(x-z)}\delta(z-t)\mathrm{d}x\mathrm{d}z\\
&=\int^{\infty}_0e^{uz}\delta(z-t)\int^{\infty}_ze^{-sx}e^{-ux}\mathrm{d}x\mathrm{d}z\\
&=\int^{\infty}_0e^{uz}\delta(z-t)\frac{1}{u+s}e^{-(u+s)z}\mathrm{d}z\\
&=\frac{1}{u+s}\int^{\infty}_0e^{-sz}\delta(z-t)\mathrm{d}z\\
&=\frac{1}{u+s}e^{-st},\; \Re(u+s)>0.\tag {2.25}\label{2.25}
\end{align*}
we determine the Laplace inverse transform
\begin{align*}
p_0(x,t)&=2e^{-(\lambda+\mu)t}\delta(x-t)+\frac{\lambda\eta l_1}{z_1+\eta}e^{z_1t}e^{-(\lambda+\mu-\eta)x}\\
&\quad-\frac{\lambda\eta l_1}{z_1+\eta}e^{-\eta t}e^{-(\lambda+\mu-\eta)x}\\
&\quad+\frac{\lambda\eta l_2}{z_2+\eta}e^{z_2t}e^{-(\lambda+\mu-\eta)x}-\frac{\lambda\eta l_2}{z_2+\eta}e^{-\eta t}e^{-(\lambda+\mu-\eta)x}\\
&\quad-\frac{\lambda\eta l_1}{z_1+\eta}e^{z_1t}e^{-(\lambda+\mu-\eta)x}
+\frac{\lambda\eta l_1}{z_1+\eta}e^{z_1t}e^{-(\lambda+\mu+z_1)x}\\
&\quad+\frac{\lambda\eta l_1}{z_1+\eta}e^{-(\lambda+\mu)t}\int^x_0e^{-(\lambda+\mu-\eta)(x-z)}\delta(z-t)\mathrm{d}z\\
&\quad-\frac{\lambda\eta l_1}{z_1+\eta}e^{-(\lambda+\mu)t}\int^x_0e^{-(\lambda+\mu+z_1)(x-z)}\delta(z-t)\mathrm{d}z\\
&\quad-\frac{\lambda\eta l_2}{z_2+\eta}e^{z_2t}e^{-(\lambda+\mu-\eta)x}+\frac{\lambda\eta l_2}{z_2+\eta}e^{z_2t}e^{-(\lambda+\mu+z_2)x}\\
&\quad+\frac{\lambda\eta l_2}{z_2+\eta}e^{-(\lambda+\mu)t}\int^x_0e^{-(\lambda+\mu-\eta)(x-z)}\delta(z-t)\mathrm{d}z\\
&\quad-\frac{\lambda\eta l_2}{z_2+\eta}e^{-(\lambda+\mu)t}\int^x_0e^{-(\lambda+\mu+z_2)(x-z)}\delta(z-t)\mathrm{d}z.\tag {2.26}\label{2.26}
\end{align*}
By inserting (\ref{2.14}) into (\ref{2.15}) and using (\ref{2.24}) and the Fubini theorem we derive
\begin{align*}
\hat{p}_1(s,t)&=\lambda e^{-(\eta+s)t}\int^t_0e^{(\eta+s)y}\Big[2e^{-(\lambda+\mu)y}\\
&\quad+\eta e^{-(\lambda+\mu)y}\int^y_0\hat{p}_1(0,\tau)e^{(\lambda+\mu)\tau}\mathrm{d}\tau\Big]\mathrm{d}y\\
&=2\lambda e^{-(\eta+s)t}\int^t_0e^{(\eta+s-\lambda-\mu)y}\mathrm{d}y\\
&\quad+\lambda\eta e^{-(\eta+s)t}\int^t_0e^{(\eta+s-\lambda-\mu)y}\int^y_0\hat{p}_1(0,\tau)e^{(\lambda+\mu)\tau}
\mathrm{d}\tau\mathrm{d}y\\
&=2\lambda e^{-(\eta+s)t}\frac{e^{(\eta+s-\lambda-\mu)y}}{\eta+s-\lambda-\mu}\Big\vert^{y=t}_{y=0}\\
&\quad+\lambda\eta e^{-(\eta+s)t}\int^t_0\hat{p}_1(0,\tau)e^{(\lambda+\mu)\tau}
\int^t_{\tau}e^{(\eta+s-\lambda-\mu)y}\mathrm{d}y\mathrm{d}\tau\\
&=\frac{2\lambda}{\eta+s-\lambda-\mu}e^{-(\eta+s)t}\left[e^{(\eta+s-\lambda-\mu)t}-1\right]\\
&\quad+\lambda\eta e^{-(\eta+s)t}\int^t_0\hat{p}_1(0,\tau)e^{(\lambda+\mu)\tau}\frac{e^{(\eta+s-\lambda-\mu)y}}{\eta+s-\lambda-\mu}\Big\vert^{y=t}_{y=\tau}\mathrm{d}\tau\\
&=\frac{2\lambda}{\eta+s-\lambda-\mu}e^{-(\lambda+\mu)t}-\frac{2\lambda}{\eta+s-\lambda-\mu}e^{-(\eta+s)t}\\
&\quad+\frac{\lambda\eta}{\eta+s-\lambda-\mu} e^{-(\eta+s)t}\\
&\quad\times\int^t_0\hat{p}_1(0,\tau)e^{(\lambda+\mu)\tau}
\left[e^{(\eta+s-\lambda-\mu)t}-e^{(\eta+s-\lambda-\mu)\tau}\right]\mathrm{d}\tau\\
&=\frac{2\lambda}{\eta+s-\lambda-\mu}e^{-(\lambda+\mu)t}-\frac{2\lambda}{\eta+s-\lambda-\mu}e^{-(\eta+s)t}\\
&\quad+\frac{\lambda\eta}{\eta+s-\lambda-\mu}e^{-(\eta+s)t}e^{(\eta+s-\lambda-\mu)t}\int^t_0\hat{p}_1(0,\tau)e^{(\lambda+\mu)\tau}\mathrm{d}\tau\\
&\quad-\frac{\lambda\eta}{\eta+s-\lambda-\mu}e^{-(\eta+s)t}\int^t_0\hat{p}_1(0,\tau)e^{(\eta+s)\tau}\mathrm{d}\tau\\
&=\frac{2\lambda}{\eta+s-\lambda-\mu}e^{-(\lambda+\mu)t}-\frac{2\lambda}{\eta+s-\lambda-\mu}e^{-(\eta+s)t}\\
&\quad+\frac{\lambda\eta}{\eta+s-\lambda-\mu}e^{-(\lambda+\mu)t}\int^t_0\hat{p}_1(0,\tau)e^{(\lambda+\mu)\tau}\mathrm{d}\tau\\
&\quad-\frac{\lambda\eta}{\eta+s-\lambda-\mu}e^{-(\eta+s)t}\int^t_0\hat{p_1}(0,\tau)e^{(\eta+s)\tau}\mathrm{d}\tau.\tag{2.27}\label{2.27}
\end{align*}
By applying the Laplace transform with respect to $t$ to (\ref{2.27}) and the Fubini theorem  it gives for $\Re z+\lambda+\mu>0,\;\Re z+\eta>0$
\begin{align*}
\hat{p}_1(0,t)&=\frac{2\lambda}{\eta-\lambda-\mu}e^{-(\lambda+\mu)t}-\frac{2\lambda}{\eta-\lambda-\mu}e^{-\eta t}\\
&\quad+\frac{\lambda\eta}{\eta-\lambda-\mu}e^{-(\lambda+\mu)t}\int^t_0\hat{p}_1(0,\tau)e^{(\lambda+\mu)\tau}\mathrm{d}\tau\\
&\quad-\frac{\lambda\eta}{\eta-\lambda-\mu}e^{-\eta t}\int^t_0\hat{p}_1(0,\tau)e^{\eta\tau}\mathrm{d}\tau\\
&\Longrightarrow\\
\int^{\infty}_0e^{-tz}\hat{p}_1(0,t)\mathrm{d}t&=\frac{2\lambda}{\eta-\lambda-\mu}\int^{\infty}_0e^{-tz}e^{-(\lambda+\mu)t}\mathrm{d}t
-\frac{2\lambda}{\eta-\lambda-\mu}\int^{\infty}_0e^{-tz}e^{-\eta t}\mathrm{d}t\\
&\quad+\frac{\lambda\eta}{\eta-\lambda-\mu}\int^{\infty}_0e^{-tz}e^{-(\lambda+\mu)t}\int^t_0\hat{p}_1(0,\tau)e^{(\lambda+\mu)\tau}\mathrm{d}\tau\mathrm{d}t\\
&\quad-\frac{\lambda\eta}{\eta-\lambda-\mu}\int^{\infty}_0e^{-tz}e^{-\eta t}\int^t_0\hat{p}_1(0,\tau)e^{\eta\tau}\mathrm{d}\tau\mathrm{d}t\\
&=\frac{2\lambda}{\eta-\lambda-\mu}\frac{-1}{\lambda+\mu+z}e^{-(\lambda+\mu+z)t}\Big\vert^{t=\infty}_{t=0}\\
&\quad-\frac{2\lambda}{\eta-\lambda-\mu}\frac{-1}{\eta+z}e^{-(\eta+z)t}\Big\vert^{t=\infty}_{t=0}\\
&\quad+\frac{\lambda\eta}{\eta-\lambda-\mu}\int^{\infty}_0\hat{p}_1(0,\tau)e^{(\lambda+\mu)\tau}\int^{\infty}_{\tau}e^{-(\lambda+\mu+z)t}\mathrm{d}t\mathrm{d}\tau\\
&\quad-\frac{\lambda\eta}{\eta-\lambda-\mu}\int^{\infty}_0\hat{p}_1(0,\tau)e^{\eta\tau}\int^{\infty}_{\tau}e^{-(z+\eta)t}\mathrm{d}t\mathrm{d}\tau\\
&=\frac{2\lambda}{\eta-\lambda-\mu}\frac{1}{\lambda+\mu+z}-\frac{2\lambda}{\eta-\lambda-\mu}\frac{1}{\eta+z}\\
&\quad+\frac{\lambda\eta}{\eta-\lambda-\mu}\int^{\infty}_0\frac{-1}{\lambda+\mu+z}\hat{p}_1(0,\tau)e^{(\lambda+\mu)\tau}
e^{-(\lambda+\mu+z)t}\Big\vert^{t=\infty}_{t=\tau}\mathrm{d}\tau\\
&\quad-\frac{\lambda\eta}{\eta-\lambda-\mu}\int^{\infty}_0\hat{p}_1(0,\tau)e^{\eta\tau}\frac{-e^{-(\eta+z)t}}{\eta+z}\bigg\vert^{t=\infty}_{t=\tau}\mathrm{d}\tau\\
&=\frac{2\lambda}{\eta-\lambda-\mu}\frac{1}{\lambda+\mu+z}-\frac{2\lambda}{\eta-\lambda-\mu}\frac{1}{\eta+z}\\
&\quad+\frac{\lambda\eta}{(\eta-\lambda-\mu)(\lambda+\mu+z)}\int^{\infty}_0\hat{p}_1(0,\tau)e^{-z\tau}\mathrm{d}\tau\\
&\quad-\frac{\lambda\eta}{(\eta-\lambda-\mu)(\eta+z)}\int^{\infty}_0\hat{p}_1(0,\tau)e^{-z\tau}\mathrm{d}\tau\\
&\Longrightarrow\\
&\left[1-\frac{\lambda\eta}{(\eta+z)(\lambda+\mu+z)}\right]\int^{\infty}_0\hat{p}_1(0,t)e^{-tz}\mathrm{d}t\\
&=\frac{2\lambda}{(\eta-\lambda-\mu)(\lambda+\mu+z)}-\frac{2\lambda}{(\eta-\lambda-\mu)(\eta+z)}\\
&\Longrightarrow\\
&\frac{(\eta+z)(\lambda+\mu+z)-\lambda\eta}{(\eta+z)(\lambda+\mu+z)}\int^{\infty}_0\hat{p}_1(0,t)e^{-tz}\mathrm{d}t\\
&=\frac{2\lambda}{(\eta+z)(\lambda+\mu+z)}\\
&\Longrightarrow\\
\int^{\infty}_0\hat{p}_1(0,t)e^{-tz}\mathrm{d}t&=\frac{2\lambda}{(\eta+z)(\lambda+\mu+z)-\lambda\eta}\\
&=\frac{2\lambda}{z^2+(\lambda+\mu+\eta)z+\mu\eta}\\
&=\frac{h_1}{z-z_1}+\frac{h_2}{z-z_2},\tag {2.28}\label{2.28}
\end{align*}
here
\begin{align}
h_1&=\frac{2\lambda}{z_1-z_2},\;h_2=-\frac{2\lambda}{z_1-z_2}.\tag {2.29}\label{2.29}
\end{align}
(\ref{2.28}) gives
\begin{align}
\hat{p}_1(0,t)&=h_1e^{z_1t}+h_2e^{z_2t}.\tag {2.30}\label{2.30}
\end{align}
By inserting (\ref{2.30}) into (\ref{2.27}) we obtain
\begin{align*}
\hat{p}_1(s,t)&=\frac{2\lambda}{\eta+s-\lambda-\mu}e^{-(\lambda+\mu)t}-\frac{2\lambda}{\eta+s-\lambda-\mu}e^{-(\eta+s)t}\\
&\quad+\frac{\lambda\eta}{\eta+s-\lambda-\mu}e^{-(\lambda+\mu)t}\int^t_0\left[h_1e^{z_1\tau}+h_2e^{z_2\tau}\right]e^{(\lambda+\mu)\tau}\mathrm{d}\tau\\
&\quad-\frac{\lambda\eta}{\eta+s-\lambda-\mu}e^{-(\eta+s)t}\int^t_0\left[h_1e^{z_1\tau}+h_2e^{z_2\tau}\right]e^{(\eta+s)\tau}\mathrm{d}\tau\\
&=\frac{2\lambda}{\eta+s-\lambda-\mu}e^{-(\lambda+\mu)t}-\frac{2\lambda}{\eta+s-\lambda-\mu}e^{-(\eta+s)t}\\
&\quad+\frac{\lambda\eta h_1}{\eta+s-\lambda-\mu}e^{-(\lambda+\mu)t}\int^t_0e^{(\lambda+\mu+z_1)\tau}\mathrm{d}\tau\\
&\quad+\frac{\lambda\eta h_2}{\eta+s-\lambda-\mu}e^{-(\lambda+\mu)t}\int^t_0e^{(\lambda+\mu+z_2)\tau}\mathrm{d}\tau\\
&\quad-\frac{\lambda\eta h_1}{\eta+s-\lambda-\mu}e^{-(\eta+s)t}\int^t_0e^{(\eta+z_1+s)\tau}\mathrm{d}\tau\\
&\quad-\frac{\lambda\eta h_2}{\eta+s-\lambda-\mu}e^{-(\eta+s)t}\int^t_0e^{(\eta+z_2+s)\tau}\mathrm{d}\tau\\
&=\frac{2\lambda}{\eta+s-\lambda-\mu}e^{-(\lambda+\mu)t}-\frac{2\lambda}{\eta+s-\lambda-\mu}e^{-(\eta+s)t}\\
&\quad+\frac{\lambda\eta h_1}{\eta+s-\lambda-\mu}e^{-(\lambda+\mu)t}\frac{1}{\lambda+\mu+z_1}\left[e^{(\lambda+\mu+z_1)t}-1\right]\\
&\quad+\frac{\lambda\eta h_2}{\eta+s-\lambda-\mu}e^{-(\lambda+\mu)t}\frac{1}{\lambda+\mu+z_2}\left[e^{(\lambda+\mu+z_2)t}-1\right]\\
&\quad-\frac{\lambda\eta h_1}{\eta+s-\lambda-\mu}e^{-(\eta+s)t}\frac{1}{\eta+z_1+s}\left[e^{(\eta+z_1+s)t}-1\right]\\
&\quad-\frac{\lambda\eta h_2}{\eta+s-\lambda-\mu}e^{-(\eta+s)t}\frac{1}{\eta+z_2+s}\left[e^{(\eta+z_2+s)t}-1\right]\\
&=\frac{2\lambda}{\eta-\lambda-\mu+s}e^{-(\lambda+\mu)t}-\frac{2\lambda}{\eta-\lambda-\mu+s}e^{-st}e^{-\eta t}\\
&\quad+\frac{\lambda\eta h_1}{\lambda+\mu+z_1}e^{z_1t}\frac{1}{\eta-\lambda-\mu+s}\\
&\quad-\frac{\lambda\eta h_1}{\lambda+\mu+z_1}e^{-(\lambda+\mu)t}\frac{1}{\eta-\lambda-\mu+s}\\
&\quad+\frac{\lambda\eta h_2}{\lambda+\mu+z_2}e^{z_2t}\frac{1}{\eta-\lambda-\mu+s}\\
&\quad-\frac{\lambda\eta h_2}{\lambda+\mu+z_2}e^{-(\lambda+\mu)t}\frac{1}{\eta-\lambda-\mu+s}\\
&\quad-\frac{\lambda\eta h_1}{(\eta-\lambda-\mu+s)(\eta+z_1+s)}e^{z_1t}\\
&\quad+\frac{\lambda\eta h_1}{(\eta-\lambda-\mu+s)(\eta+z_1+s)}e^{-(\eta+s)t}\\
&\quad-\frac{\lambda\eta h_2}{(\eta-\lambda-\mu+s)(\eta+z_2+s)}e^{z_2t}\\
&\quad+\frac{\lambda\eta h_2}{(\eta-\lambda-\mu+s)(\eta+z_2+s)}e^{-(\eta+s)t}\\
&=\frac{2\lambda}{\eta-\lambda-\mu+s}e^{-(\lambda+\mu)t}-\frac{2\lambda}{\eta-\lambda-\mu+s}e^{-st}e^{-\eta t}\\
&\quad+\frac{\lambda\eta h_1}{\lambda+\mu+z_1}e^{z_1t}\frac{1}{\eta-\lambda-\mu+s}\\
&\quad-\frac{\lambda\eta h_1}{\lambda+\mu+z_1}e^{-(\lambda+\mu)t}\frac{1}{\eta-\lambda-\mu+s}\\
&\quad+\frac{\lambda\eta h_2}{\lambda+\mu+z_2}e^{z_2t}\frac{1}{\eta-\lambda-\mu+s}\\
&\quad-\frac{\lambda\eta h_2}{\lambda+\mu+z_2}e^{-(\lambda+\mu)t}\frac{1}{\eta-\lambda-\mu+s}\\
&\quad-\frac{\lambda\eta h_1}{\lambda+\mu+z_1}e^{z_1t}\frac{1}{\eta-\lambda-\mu+s}\\
&\quad+\frac{\lambda\eta h_1}{\lambda+\mu+z_1}e^{z_1t}\frac{1}{\eta+z_1+s}\\
&\quad+\frac{\lambda\eta h_1}{\lambda+\mu+z_1}e^{-\eta t}\frac{1}{\eta-\lambda-\mu+s}e^{-st}\\
&\quad-\frac{\lambda\eta h_1}{\lambda+\mu+z_1}e^{-\eta t}\frac{1}{\eta+z_1+s}e^{-st}\\
&\quad-\frac{\lambda\eta h_2}{\lambda+\mu+z_2}e^{z_2t}\frac{1}{\eta-\lambda-\mu+s}\\
&\quad+\frac{\lambda\eta h_2}{\lambda+\mu+z_2}e^{z_2t}\frac{1}{\eta+z_2+s}\\
&\quad+\frac{\lambda\eta h_2}{\lambda+\mu+z_2}e^{-\eta t}\frac{1}{\eta-\lambda-\mu+s}e^{-st}\\
&\quad-\frac{\lambda\eta h_2}{\lambda+\mu+z_2}e^{-\eta t}\frac{1}{\eta+z_2+s}e^{-st}.\tag {2.31}\label{2.31}
\end{align*}
By combining (\ref{2.23}), (\ref{2.24}) and (\ref{2.25}) with (\ref{2.31}) we determine the Laplace inverse transform
\begin{align*}
p_1(x,t)&=2\lambda e^{(\lambda+\mu-\eta)x}e^{-(\lambda+\mu)t}-2\lambda e^{-\eta t}\int^x_0e^{(\lambda+\mu-\eta)(x-z)}\delta(z-t)\mathrm{d}z\\
&\quad+\frac{\lambda\eta h_1}{\lambda+\mu+z_1}e^{z_1t}e^{(\lambda+\mu-\eta)x}\\
&\quad-\frac{\lambda\eta h_1}{\lambda+\mu+z_1}e^{-(\lambda+\mu)t}e^{(\lambda+\mu-\eta)x}\\
&\quad+\frac{\lambda\eta h_2}{\lambda+\mu+z_2}e^{z_2t}e^{(\lambda+\mu-\eta)x}\\
&\quad-\frac{\lambda\eta h_2}{\lambda+\mu+z_2}e^{-(\lambda+\mu)t}e^{(\lambda+\mu-\eta)x}\\
&\quad-\frac{\lambda\eta h_1}{\lambda+\mu+z_1}e^{z_1t}e^{(\lambda+\mu-\eta)x}\\
&\quad+\frac{\lambda\eta h_1}{\lambda+\mu+z_1}e^{z_1t}e^{-(\eta+z_1)x}\\
&\quad+\frac{\lambda\eta h_1}{\lambda+\mu+z_1}e^{-\eta t}\int^x_0e^{(\lambda+\mu-\eta)(x-z)}\delta(z-t)\mathrm{d}z\\
&\quad-\frac{\lambda\eta h_1}{\lambda+\mu+z_1}e^{-\eta t}\int^x_0e^{-(\eta+z_1)(x-z)}\delta(z-t)\mathrm{d}z\\
&\quad-\frac{\lambda\eta h_2}{\lambda+\mu+z_2}e^{z_2t}e^{(\lambda+\mu-\eta)x}\\
&\quad-\frac{\lambda\eta h_2}{\lambda+\mu+z_2}e^{z_2t}e^{-(\eta+z_2)x}\\
&\quad+\frac{\lambda\eta h_2}{\lambda+\mu+z_2}e^{-\eta t}\int^x_0e^{(\lambda+\mu-\eta)(x-z)}\delta(z-t)\mathrm{d}z\\
&\quad-\frac{\lambda\eta h_2}{\lambda+\mu+z_2}e^{-\eta t}\int^x_0e^{-(\eta+z_2)(x-z)}\delta(z-t)\mathrm{d}z.\tag {2.32}\label{2.32}
\end{align*}
Since $\;\lambda>0,\;\mu>0,\;\eta>0,\;\Re z_1<0,\;\Re z_2<0,$ it is easy to see from (\ref{2.32}) and (\ref{2.26})
\begin{align}
&\lim_{t\to\infty}p_0(x,t)=0,\quad \lim_{t\to\infty}p_1(x,t)=0.\tag{2.33}\label{2.33}
\end{align}
This together with (\ref{2.26}) and (\ref{2.32}) are just the result of this theorem.
\end{proof}

The result (\ref{2.33}) is quite different from the asymptotic behavior of the time-dependent solutions of many reliability models \cite{Gupur2011,Gupur2014,Gupur2016,Gupur2020,Gupur2022,Gupur2005,GupurWong,HajiRadl,HuXuYuZhu}.

\section{General Case}\label{sec2}

\begin{lemma}
If
\begin{align*}
&\int^{\infty}_0\frac{\partial p_k(x,t)}{\partial x}e^{-st}\mathrm{d}t=\frac{\mathrm{d}}{\mathrm{d}x}\int^{\infty}_0p_k(x,t)e^{-st}\mathrm{d}t,\; k=0,1,
\end{align*}
then the time-dependent solution of the equation system (\ref{1.1})$\sim$(\ref{1.5}) satisfies
\begin{align*}
\hat{p}_1(x,s)&=e^{-sx-\int^x_0\eta(\tau)\mathrm{d}\tau}\bigg\{\int^{\infty}_0\lambda(x)e^{-sx-\int^x_0(\lambda(\tau)+\mu(\tau))\mathrm{d}\tau}\mathrm{d}x\\
&\quad+\int^{\infty}_0\lambda(x)e^{-sx-\int^x_0(\lambda(\tau)+\mu(\tau))\mathrm{d}\tau}\\
&\quad\times\int^x_0\delta(y)e^{sy+\int^y_0(\lambda(\tau)+\mu(\tau))\mathrm{d}\tau}\mathrm{d}y\mathrm{d}x\bigg\}\\
&\quad\times\bigg[1-\int^{\infty}_0\lambda(x)e^{-sx-\int^x_0(\lambda(\tau)+\mu(\tau))\mathrm{d}\tau}\mathrm{d}x\\
&\quad\times\int^{\infty}_0\eta(x)e^{-sx-\int^x_0\eta(\tau)\mathrm{d}\tau}\mathrm{d}x\bigg]^{-1},\\
\hat{p}_0(x,s)&=e^{-sx-\int^x_0(\lambda(\tau)+\mu(\tau))\mathrm{d}\tau}\\
&\quad+e^{-sx-\int^x_0(\lambda(\tau)+\mu(\tau))\mathrm{d}\tau}\int^{\infty}_0\eta(x)e^{-sx-\int^x_0\eta(\tau)\mathrm{d}\tau}\mathrm{d}x\\
&\quad\times\bigg\{\int^{\infty}_0\lambda(x)e^{-sx-\int^x_0(\lambda(\tau)+\mu(\tau))\mathrm{d}\tau}\mathrm{d}x\\
&\quad+\int^{\infty}_0\lambda(x)e^{-sx-\int^x_0(\lambda(\tau)+\mu(\tau))\mathrm{d}\tau}\\
&\quad\times\int^x_0\delta(y)e^{sy+\int^y_0(\lambda(\tau)+\mu(\tau))\mathrm{d}\tau}\mathrm{d}y\mathrm{d}x\bigg\}\\
&\quad\times\bigg[1-\int^{\infty}_0\lambda(x)e^{-sx-\int^x_0(\lambda(\tau)+\mu(\tau))\mathrm{d}\tau}\mathrm{d}x\\
&\quad\times\int^{\infty}_0\eta(x)e^{-sx-\int^x_0\eta(\tau)\mathrm{d}\tau}\mathrm{d}x\bigg]^{-1}.
\end{align*}
\end{lemma}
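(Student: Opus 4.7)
The plan is to take the Laplace transform with respect to $t$, which turns the PDE system into a pair of first-order linear ODEs in $x$, then solve them explicitly and close up using the Laplace-transformed boundary conditions. Denote $\hat{p}_k(x,s)=\int_0^\infty p_k(x,t)e^{-st}\mathrm{d}t$. Applying the transform to (\ref{1.1})--(\ref{1.2}), using the hypothesis to swap $\partial_x$ with the $t$-integral, and substituting the initial conditions (\ref{1.5}), I obtain
\begin{align*}
\frac{\mathrm{d}\hat{p}_0(x,s)}{\mathrm{d}x}+\bigl(s+\lambda(x)+\mu(x)\bigr)\hat{p}_0(x,s)&=\delta(x),\\
\frac{\mathrm{d}\hat{p}_1(x,s)}{\mathrm{d}x}+\bigl(s+\eta(x)\bigr)\hat{p}_1(x,s)&=0,
\end{align*}
with the initial value at $x=0$ being the (still unknown) boundary values $\hat{p}_0(0,s)$ and $\hat{p}_1(0,s)$.

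Next, I would integrate each ODE using the integrating factors $e^{sx+\int_0^x(\lambda+\mu)\mathrm{d}\tau}$ and $e^{sx+\int_0^x\eta \,\mathrm{d}\tau}$, which yields
\begin{align*}
\hat{p}_0(x,s)&=\hat{p}_0(0,s)\,e^{-sx-\int_0^x(\lambda+\mu)\mathrm{d}\tau}\\
&\quad+e^{-sx-\int_0^x(\lambda+\mu)\mathrm{d}\tau}\int_0^x\delta(y)e^{sy+\int_0^y(\lambda+\mu)\mathrm{d}\tau}\mathrm{d}y,\\
\hat{p}_1(x,s)&=\hat{p}_1(0,s)\,e^{-sx-\int_0^x\eta\,\mathrm{d}\tau}.
\end{align*}
Then I would Laplace-transform the boundary conditions (\ref{1.3})--(\ref{1.4}) in $t$. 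Since $\int_0^\infty\delta(t)e^{-st}\mathrm{d}t=1$, setting
\[
A(s)=\int_0^\infty\!\lambda(x)e^{-sx-\int_0^x(\lambda+\mu)\mathrm{d}\tau}\mathrm{d}x,\quad B(s)=\int_0^\infty\!\eta(x)e^{-sx-\int_0^x\eta\,\mathrm{d}\tau}\mathrm{d}x,
\]
and $C(s)$ equal to the double-integral involving $\delta(y)$ that appears in the lemma, the boundary conditions reduce to the $2\times 2$ linear system
\begin{align*}
\hat{p}_0(0,s)&=1+B(s)\,\hat{p}_1(0,s),\\
\hat{p}_1(0,s)&=A(s)\,\hat{p}_0(0,s)+C(s).
\end{align*}

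Finally I would solve this linear system explicitly: eliminating $\hat{p}_0(0,s)$ gives $\hat{p}_1(0,s)=(A+C)/(1-AB)$ and hence $\hat{p}_0(0,s)=1+B(A+C)/(1-AB)$, provided $1-A(s)B(s)\neq 0$. Substituting these two closed-form boundary transforms back into the two integral representations above produces exactly the formulae claimed in the lemma. The main obstacle is bookkeeping rather than conceptual: one has to treat the Dirac mass $\delta(x)$ appearing on the right of the ODE for $\hat{p}_0$ consistently with the way $\delta(t)$ is absorbed through $\int_0^\infty\delta(t)e^{-st}\mathrm{d}t=1$ in the boundary condition, and verify that the hypothesis on interchanging $\partial_x$ with the $t$-Laplace integral legitimately produces the ODEs in the distributional sense. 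Once those points are honored, the rest is algebra.
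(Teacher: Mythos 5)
Your proposal follows essentially the same route as the paper: Laplace transform in $t$, integrating-factor solutions of the resulting ODEs in $x$, Laplace-transformed boundary conditions, and elimination to close the system (the paper eliminates the scalar $\int_0^\infty\lambda(x)\hat p_0(x,s)\,\mathrm{d}x$ rather than the pair of boundary values $\hat p_0(0,s),\hat p_1(0,s)$, which is the same algebra). One small caveat: faithfully substituting your boundary transforms back into the representation for $\hat p_0$ retains the summand $e^{-sx-\int_0^x(\lambda(\tau)+\mu(\tau))\mathrm{d}\tau}\int_0^x\delta(y)e^{sy+\int_0^y(\lambda(\tau)+\mu(\tau))\mathrm{d}\tau}\mathrm{d}y$, which is present in the paper's intermediate formula (3.10) but silently absent from the final expression (3.14) and from the lemma as stated, so your claim that the substitution "produces exactly the formulae claimed" is not quite right --- the discrepancy lies in the paper's statement, not in your method.
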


\begin{proof}
We consider the Laplace transform of the time-dependent solution of the equation system (\ref{1.1})$\sim$(\ref{1.5}):
\begin{align}
&\frac{\partial p_0(x,t)}{\partial t}+\frac{\partial p_0(x,t)}{\partial x}=-(\lambda(x)+\mu(x))p_0(x,t),\tag {3.1}\label{3.1}\\
&\frac{\partial p_1(x,t)}{\partial t}+\frac{\partial p_1(x,t)}{\partial x}=-\eta(x)p_1(x,t),\tag {3.2}\label{3.2}\\
&p_0(0,t)=\delta(t)+\int^{\infty}_0p_1(x,t)\eta(x)\mathrm{d}x,\tag {3.3}\label{3.3}\\
&p_1(0,t)=\int^{\infty}_0p_0(x,t)\lambda(x)\mathrm{d}x,\tag {3.4}\label{3.4}\\
&p_0(x,0)=\delta(x),\quad p_1(x,0)=0.\tag {3.5}\label{3.5}
\end{align}
By applying the Laplace transform with respect to $t$ to (\ref{3.1}) and (\ref{3.2}), and using (\ref{2.6}), (\ref{2.7}) and (\ref{3.5}) we derive
\begin{align*}
&s\hat{p}_0(x,s)-p_0(x,0)+\frac{\partial \hat{p}_0(x,s)}{\partial x}=-(\lambda(x)+\mu(x))\hat{p}_0(x,s)\\
&\Longrightarrow\\
&\frac{\partial\hat{p}_0(x,s)}{\partial x}=-(\lambda(x)+\mu(x)+s)\hat{p}_0(x,s)+p_0(x,0)\\
&\Longrightarrow\\
\hat{p}_0(x,s)&=\hat{p}_0(0,s)e^{-\int^x_0(\lambda(\tau)+\mu(\tau))\mathrm{d}\tau-sx}\\
&\quad+e^{-\int^x_0(\lambda(\tau)+\mu(\tau))\mathrm{d}\tau-sx}\int^x_0p_0(y,0)e^{\int^y_0(\lambda(\tau)+\mu(\tau))\mathrm{d}\tau+sy}\mathrm{d}y\\
&=\hat{p}_0(0,s)e^{-\int^x_0(\lambda(\tau)+\mu(\tau))\mathrm{d}\tau-sx}\\
&\quad+e^{-\int^x_0(\lambda(\tau)+\mu(\tau))\mathrm{d}\tau-sx}\int^x_0\delta(y)e^{\int^y_0(\lambda(\tau)+\mu(\tau))\mathrm{d}\tau+sy}\mathrm{d}y.\tag {3.6}\label{3.6}
\end{align*}
\begin{align*}
&s\hat{p}_1(x,s)-p_1(x,0)+\frac{\partial\hat{p}_1(x,s)}{\partial x}=-\eta(x)\hat{p}_1(x,s)\\
&\Longrightarrow\\
&\frac{\partial\hat{p}_1(x,s)}{\partial x}=-(\eta(x)+s)\hat{p}_1(x,s)+p_1(x,0)\\
&\Longrightarrow\\
\hat{p}_1(x,s)&=\hat{p}_1(0,s)e^{-sx-\int^x_0\eta(\tau)\mathrm{d}\tau}\\
&\quad+e^{-sx-\int^x_0\eta(\tau)d\tau}\int^x_0p_1(y,0)e^{sy+\int^y_0\eta(\tau)\mathrm{d}\tau}\mathrm{d}y\\
&=\hat{p}_1(0,s)e^{-sx-\int^x_0\eta(\tau)\mathrm{d}\tau}.\tag {3.7}\label{3.7}
\end{align*}
(\ref{3.3}) and the Fubini theorem give
\begin{align*}
\hat{p}_0(0,s)&=\int^{\infty}_0p_0(0,t)e^{-st}\mathrm{d}t\\
&=\int^{\infty}_0e^{-st}\left[\delta(t)+\int^{\infty}_0p_1(x,t)\eta(x)\mathrm{d}x\right]\mathrm{d}t\\
&=\int^{\infty}_0e^{-st}\delta(t)\mathrm{d}t+\int^{\infty}_0e^{-st}\int^{\infty}_0p_1(x,t)\eta(x)\mathrm{d}x\mathrm{d}t\\
&=1+\int^{\infty}_0\int^{\infty}_0p_1(x,t)\eta(x)e^{-st}\mathrm{d}t\mathrm{d}x\\
&=1+\int^{\infty}_0\eta(x)\hat{p}_1(x,s)\mathrm{d}x.\tag {3.8}\label{3.8}
\end{align*}
(\ref{3.4}) and the Fubini theorem imply
\begin{align*}
\hat{p}_1(0,s)&=\int^{\infty}_0p_1(0,t)e^{-st}\mathrm{d}t\\
&=\int^{\infty}_0e^{-st}\int^{\infty}_0p_0(x,t)\lambda(x)\mathrm{d}x\mathrm{d}t\\
&=\int^{\infty}_0\int^{\infty}_0p_0(x,t)\lambda(x)e^{-st}\mathrm{d}t\mathrm{d}x\\
&=\int^{\infty}_0\hat{p}_0(x,s)\lambda(x)\mathrm{d}x.\tag {3.9}\label{3.9}
\end{align*}
By inserting  (\ref{3.8}) and (\ref{3.9}) into (\ref{3.6}) and (\ref{3.7}) we have
\begin{align*}
\hat{p}_0(x,s)&=\left[1+\int^{\infty}_0\eta(x)\hat{p}_1(x,s)\mathrm{d}x\right]e^{-sx-\int^x_0(\lambda(\tau)+\mu(\tau))\mathrm{d}\tau}\\
&\quad+e^{-sx-\int^x_0(\lambda(\tau)+\mu(\tau))\mathrm{d}\tau}\int^x_0\delta(y)e^{sy+\int^y_0(\lambda(\tau)+\mu(\tau))\mathrm{d}\tau}\mathrm{d}y\\
&=e^{-sx-\int^x_0(\lambda(\tau)+\mu(\tau))\mathrm{d}\tau}\\
&\quad+e^{-sx-\int^x_0(\lambda(\tau)+\mu(\tau))\mathrm{d}\tau}\int^{\infty}_0\eta(x)\hat{p}_1(x,s)\mathrm{d}x\\
&\quad+e^{-sx-\int^x_0(\lambda(\tau)+\mu(\tau))\mathrm{d}\tau}\int^x_0\delta(y)e^{sy+\int^y_0(\lambda(\tau)+\mu(\tau))\mathrm{d}\tau}\mathrm{d}y.\tag {3.10}\label{3.10}\\
\hat{p}_1(x,s)&=e^{-sx-\int^x_0\eta(\tau)\mathrm{d}\tau}\int^{\infty}_0\hat{p}_0(x,s)\lambda(x)\mathrm{d}x.\tag {3.11}\label{3.11}
\end{align*}
By substituting (\ref{3.11}) into (\ref{3.10}) we determine
\begin{align*}
\hat{p}_0(x,s)&=e^{-sx-\int^x_0(\lambda(\tau)+\mu(\tau))\mathrm{d}\tau}\\
&\quad+e^{-sx-\int^x_0(\lambda(\tau)+\mu(\tau))\mathrm{d}\tau}\\
&\quad\times\int^{\infty}_0\eta(x)e^{-sx-\int^x_0\eta(\tau)\mathrm{d}\tau}\mathrm{d}x\int^{\infty}_0\hat{p}_0(x,s)\lambda(x)\mathrm{d}x\\
&\quad+e^{-sx-\int^x_0(\lambda(\tau)+\mu(\tau))\mathrm{d}\tau}\\
&\quad\times\int^x_0\delta(y)e^{sy+\int^y_0(\lambda(\tau)+\mu(\tau))\mathrm{d}\tau}\mathrm{d}y\\
&\Longrightarrow\\
\int^{\infty}_0\hat{p}_0(x,s)\lambda(x)\mathrm{d}x
&=\int^{\infty}_0\lambda(x)e^{-sx-\int^x_0(\lambda(\tau)+\mu(\tau))\mathrm{d}\tau}\mathrm{d}x\\
&\quad+\int^{\infty}_0\lambda(x)e^{-sx-\int^x_0(\lambda(\tau)+\mu(\tau))\mathrm{d}\tau}\mathrm{d}x\\
&\quad\times\int^{\infty}_0\eta(x)e^{-sx-\int^x_0\eta(\tau)\mathrm{d}\tau}\mathrm{d}x\int^{\infty}_0\hat{p}_0(x,s)\lambda(x)\mathrm{d}x\\
&\quad+\int^{\infty}_0\lambda(x)e^{-sx-\int^x_0(\lambda(\tau)+\mu(\tau))\mathrm{d}\tau}\\
&\quad\times\int^x_0\delta(y)e^{sy+\int^y_0(\lambda(\tau)+\mu(\tau))\mathrm{d}\tau}\mathrm{d}y\mathrm{d}x\\
&\Longrightarrow\\
&\bigg[1-\int^{\infty}_0\lambda(x)e^{-sx-\int^x_0(\lambda(\tau)+\mu(\tau))\mathrm{d}\tau}\mathrm{d}x\\
&\quad\times\int^{\infty}_0\eta(x)e^{-sx-\int^x_0\eta(\tau)d\tau}\mathrm{d}x\bigg]\int^{\infty}_0\hat{p}_0(x,s)\lambda(x)\mathrm{d}x\\
&=\int^{\infty}_0\lambda(x)e^{-sx-\int^x_0(\lambda(\tau)+\mu(\tau))\mathrm{d}\tau}\mathrm{d}x\\
&\quad+\int^{\infty}_0\lambda(x)e^{-sx-\int^x_0(\lambda(\tau)+\mu(\tau))\mathrm{d}\tau}\\
&\quad\times\int^x_0\delta(y)e^{sy+\int^y_0(\lambda(\tau)+\mu(\tau))\mathrm{d}\tau}\mathrm{d}y\mathrm{d}x\\
&\Longrightarrow\\
\int^{\infty}_0\hat{p}_0(x,s)\lambda(x)\mathrm{d}x
&=\bigg\{\int^{\infty}_0\lambda(x)e^{-sx-\int^x_0(\lambda(\tau)+\mu(\tau))\mathrm{d}\tau}\mathrm{d}x\\
&\quad+\int^{\infty}_0\lambda(x)e^{-sx-\int^x_0(\lambda(\tau)+\mu(\tau))\mathrm{d}\tau}\\
&\quad\times\int^x_0\delta(y)e^{sy+\int^y_0(\lambda(\tau)+\mu(\tau))\mathrm{d}\tau}\mathrm{d}y\mathrm{d}x\bigg\}\\
&\quad\times\Big[1-\int^{\infty}_0\lambda(x)e^{-sx-\int^x_0(\lambda(\tau)+\mu(\tau))\mathrm{d}\tau}\mathrm{d}x\\
&\quad\times\int^{\infty}_0\eta(x)e^{-sx-\int^x_0\eta(\tau)\mathrm{d}\tau}\mathrm{d}x\Big]^{-1}.\tag {3.12}\label{3.12}
\end{align*}
By combining (\ref{3.12}) with (\ref{3.11}) we calculate
\begin{align*}
\hat{p}_1(x,s)&=e^{-sx-\int^x_0\eta(\tau)d\tau}\int^{\infty}_0\hat{p}_0(x,s)\lambda(x)\mathrm{d}x\\
&=e^{-sx-\int^x_0\eta(\tau)\mathrm{d}\tau}\bigg\{\int^{\infty}_0\lambda(x)e^{-sx-\int^x_0(\lambda(\tau)+\mu(\tau))\mathrm{d}\tau}\mathrm{d}x\\
&\quad+\int^{\infty}_0\lambda(x)e^{-sx-\int^x_0(\lambda(\tau)+\mu(\tau))\mathrm{d}\tau}\\
&\quad\times\int^x_0\delta(y)e^{sy+\int^y_0(\lambda(\tau)+\mu(\tau))\mathrm{d}\tau}\mathrm{d}y\mathrm{d}x\bigg\}\\
&\quad\times\Big[1-\int^{\infty}_0\lambda(x)e^{-sx-\int^x_0(\lambda(\tau)+\mu(\tau))\mathrm{d}\tau}\mathrm{d}x\\
&\quad\times\int^{\infty}_0\eta(x)e^{-sx-\int^x_0\eta(\tau)\mathrm{d}\tau}\mathrm{d}x\Big]^{-1}.\tag {3.13}\label{3.13}
\end{align*}
By inserting (\ref{3.13}) into (\ref{3.10}) we have
\begin{align*}
\hat{p}_0(x,s)&=e^{-sx-\int^x_0(\lambda(\tau)+\mu(\tau))\mathrm{d}\tau}\\
&\quad+e^{-sx-\int^x_0(\lambda(\tau)+\mu(\tau))\mathrm{d}\tau}\int^{\infty}_0\eta(x)e^{-sx-\int^x_0\eta(\tau)\mathrm{d}\tau}\mathrm{d}x\\
&\quad\times\bigg\{\int^{\infty}_0\lambda(x)e^{-sx-\int^x_0(\lambda(\tau)+\mu(\tau))\mathrm{d}\tau}\mathrm{d}x\\
&\quad+\int^{\infty}_0\lambda(x)e^{-sx-\int^x_0(\lambda(\tau)+\mu(\tau))\mathrm{d}\tau}\\
&\quad\times\int^x_0\delta(y)e^{sy+\int^y_0(\lambda(\tau)+\mu(\tau))\mathrm{d}\tau}\mathrm{d}y\mathrm{d}x\bigg\}\\
&\quad\times\Big[1-\int^{\infty}_0\lambda(x)e^{-sx-\int^x_0(\lambda(\tau)+\mu(\tau))\mathrm{d}\tau}\mathrm{d}x\\
&\quad\times\int^{\infty}_0\eta(x)e^{-sx-\int^x_0\eta(\tau)\mathrm{d}\tau}\mathrm{d}x\Big]^{-1}.\tag {3.14}\label{3.14}
\end{align*}
(\ref{3.13}) and (\ref{3.14}) are the result of the lemma.

\end{proof}

\begin{remark}
If we can determine the Laplace inverse transform of (\ref{3.13}) and (\ref{3.14}), then we can get the time-dependent solution of the equation system (\ref{1.1})$\sim$(\ref{1.5}). But, until today we have not discovered
an efficient method to resolve this problem.
\end{remark}

If we regard the Dirac-delta function $\;\delta\in L^1[0,\infty)$ and select
$$
X=\left\{p\in L^1[0,\infty)\times L^1[0,\infty)\;\Big\vert\; \|p\|=\sum^1_{i=0}\|p_i\|_{L^1[0,\infty)}\right\}
$$
as a state space and define an operator as
$$
(A(t)p)(x)=\left(
\begin{matrix}
-\frac{\mathrm{d}}{\mathrm{d}x}-(\lambda(x)+\mu(x)) &0\\
0 &-\frac{\mathrm{d}}{\mathrm{d}x}-\eta(x)
\end{matrix}
\right)
\left(
\begin{matrix}
p_0(x)\\
p_1(x)
\end{matrix}
\right),
$$
$$
D(A(t))=\left\{p\in X\;\Big\vert
\begin{array}{ll}
p_i(x)\;(i=0,1)\;\text{are absolutely continuous}\\
\text{and satisfy}\;p(0)=(\delta(\cdot),0)^T+\int^{\infty}_0\Gamma p(x)dx
\end{array}
\right\},
$$
where
$$
\Gamma=\left(
\begin{matrix}
0 &\eta(x)\\
\lambda(x) &0
\end{matrix}
\right),
$$
then we convert the equation system (\ref{1.1})$\sim$(\ref{1.5}) as an abstract Cauchy problem in the Banach space $\;X:$
\begin{align}
\left\{
\begin{array}{ll}
\frac{\mathrm{d}p(t)}{\mathrm{d}t}=A(t)p(t),\;\forall t\in (0,\infty)\\
p(0)=(\delta(x),0)
\end{array}
\right.\tag{3.15}\label{3.15}
\end{align}
The system (\ref{3.15}) is quite different from the case in \cite{Gupur2011,Gupur2020} where many reliability models have been resolved by using $C_0-$semigroup theory \cite{Arendt2001,Pazy,SongYu,Webb}. The main difficult point of (\ref{3.15}) is that $A(t)$ is an evolution family of $t.$

\section{Open Problems}

One may find the following reliability models in Chapter 5 of \cite{Gupur2011} where the models were left as open problems.

In 1976,   Linton \cite{Linton1976} described a two-unit parallel redundant system by the following system of equations:
\begin{align*}
&\frac{\partial q_{0,1}(x,t)}{\partial t}+\frac{\partial q_{0,1}(x,t)}{\partial x}=\mu q_{2,m}(x,t),\\
&\frac{\partial q_{0,i}(x,t)}{\partial t}+\frac{\partial q_{0,i}(x,t)}{\partial x}=-\lambda q_{0,i}(x,t)
+\lambda q_{0,i-1}(x,t),\quad 2\le i\le k,\\
&\frac{\partial q_{1,1}(x,t)}{\partial t}+\frac{\partial q_{1,1}(x,t)}{\partial x}=0,\\
&\frac{\partial q_{1,2}(x,t)}{\partial t}+\frac{\partial q_{1,2}(x,t)}{\partial x}=-\lambda q_{1,2}(x,t)+\lambda q_{1,1}(x,t),\\
&\frac{\partial q_{1,i}(x,t)}{\partial t}+\frac{\partial q_{1,i}(x,t)}{\partial x}=-\lambda q_{1,i}(x,t)+\lambda q_{1,i-1}(x,t),\quad 3\le i\le k,\\
&\frac{\partial q_{2,1}(x,t)}{\partial t}+\frac{\partial q_{2,1}(x,t)}{\partial x}=-\mu q_{2,1}(x,t)+\lambda q_{0,k}(x,t),\\
&\frac{\partial q_{2,j}(x,t)}{\partial t}+\frac{\partial q_{2,j}(x,t)}{\partial x}=-\mu q_{2,j}(x,t)+\mu q_{2,j-1}(x,t),\quad 2\le j\le m,\\
&q_{0,1}(0,t)=\int^{\infty}_0q_{1,1}(x,t)g(x)\mathrm{d}x+\delta(t),\\
&q_{0,i}(0,t)=\int^{\infty}_0q_{1,i}(x,t)g(x)\mathrm{d}x,\quad 2\le i\le k,\\
&q_{1,i}(0,t)=\int^{\infty}_0q_{0,i}(x,t)f(x)\mathrm{d}x,\quad 1\le i\le k,\\
&q_{2,j}(0,t)=0,\quad 1\le j\le m,\\
&q_{0,1}(x,0)=\delta(x),\quad q_{0,i}(x,0)=0,\quad 2\le i\le k\\
&q_{1,i}(x,0)=0,\; 1\le i\le k;\; q_{2,j}(x,0)=0,\; 1\le j\le m.
\end{align*}
Where $(x,t)\in [0,\infty)\times[0,\infty);$ $p_{0,i}(x,t)\; (1\le i\le k)$ is the probability that at moment $t,$ no units are down, unit 2 is in the $i$th stage of
failure, and unit 1 has been operating for an amount of time $x;$ $p_{1,i}(x,t)\; (1\le i\le k)$ is the probability that at moment $t,$ unit 1 is down, unit 2 is
in the $i$th stage of failure and an amount of $x,$ has already been spent on the current repair of unit 1;
$p_{2,j}(x,t)\; (1\le j\le m)$ are the probability that at moment $t,$ unit 2 is down, unit 2 is in the $j$th phase of repair, and unit 1
has been operating for an amount of time $x;$ $\lambda>0$ and $\mu>0$ are parameters of unit 1 and unit 2 respectively; $g(x)$ is the repair rate of unit 1;
$f(x)$ is the failure rate of unit 2, $\delta(\cdot)$ is the Dirac-delta function.

When $g(x)$ and $f(x)$ are constants, by using our methods of theorem \ref{geni} one may determine the explicit form of the time-dependent solution of the above model. When  $g(x)$ and $f(x)$ are non-constants, well-posedness of the above model and asymptotic behavior of its time-dependent solution have not been obtained until today.

In 1993, Li et al. \cite{LiShi} established the following model:
\begin{align*}
&\frac{\partial p_{0,1}(x,t)}{\partial t}+\frac{\partial p_{0,1}(x,t)}{\partial x}=-(a_1+a_2(x))p_{0,1}(x,t)\\
&\hspace{4cm}+\int^x_0p_2(x,u,t)b_1(u)\mathrm{d}u,\\
&\frac{\partial p_{0,i}(x,t)}{\partial t}+\frac{\partial p_{0,i}(x,t)}{\partial x}=-(a_1+a_2(x))p_{0,i}(x,t)+a_1p_{0,i-1}(x,t),\\
&\hspace{4cm}\; 2\le i\le m,\\
&\frac{\partial p_{1,1}(x,t)}{\partial t}+\frac{\partial p_{1,1}(x,t)}{\partial x}=-(a_1+b_2(x))p_{1,1}(x,t)\\
&\hspace{4cm}+\int^{\infty}_0p_4(x,u,t)b_1(u)\mathrm{d}u,\\
&\frac{\partial p_{1,i}(x,t)}{\partial t}+\frac{\partial p_{1,i}(x,t)}{\partial x}=-(a_1+b_2(x))p_{1,i}(x,t)\\
&\hspace{4cm}+a_1p_{1,i-1}(x,t),\; 2\le i\le m,\\
&\frac{\partial p_2(x,u,t)}{\partial t}+\frac{\partial p_2(x,u,t)}{\partial x}+\frac{\partial p_2(x,u,t)}{\partial u}\\
&\hspace{4cm}=-(a_2(x)+ b_1(u))p_2(x,u,t),\\
&\frac{\partial p_3(x,t)}{\partial t}+\frac{\partial p_3(x,t)}{\partial x}=-b_1(x)p_3(x,t)
+\int^{\infty}_xa_2(u)p_2(x,u,t)\mathrm{d}u,\\
&\frac{\partial p_4(x,u,t)}{\partial t}+\frac{\partial p_4(x,u,t)}{\partial x}=-b_1(x)p_4(x,u,t),\\
&p_{0,1}(0,t)=\int^{\infty}_0p_{1,1}(x,t)b_2(x)\mathrm{d}x+\delta(t),\\
&p_{0,i}(0,t)=\int^{\infty}_0p_{1,i}(x,t)b_2(x)\mathrm{d}x,\quad 2\le i\le m,\\
&p_{1,1}(0,t)=\int^{\infty}_0p_3(x,t)b_1(x)\mathrm{d}x+\int^{\infty}_0p_{0,1}(x,t)a_2(x)\mathrm{d}x,\\
&p_{1,i}(0,t)=\int^{\infty}_0p_{0,i}(x,t)a_2(x)\mathrm{d}x,\; 2\le i\le m,\\
&p_2(0,u,t)=a_1p_{0,m}(u,t),\\
&p_3(0,t)=0,\\
&p_4(0,u,t)=a_1p_{1,m}(u,t),\\
&p_{0,1}(x,0)=\delta(x),\; p_{0,i}(x,0)=0,\; 2\le i\le m,\\
&p_{1,i}(x,0)=0,\quad 0\le i\le m,\\
&p_2(x,u,0)=p_3(x,u,0)=p_4(x,u,0)=0.
\end{align*}
Where $(x,u,t)\in [0,\infty)\times[0,\infty)\times[0,\infty);$ $p_{0,i}(x,t)$ is the probability that at time $t$ two units are operating and the operating unit 1
is in phase $i,$ and the age of unit 2 is $x,\; i=1, 2, \cdots, m;$ $p_{1,i}(x,t)$ is the probability that at time $t$ unit 1 is operating, the operating phase is
$i$ and unit 2 is being repaired and the elapsed time of the failed unit 2 is $x,\; i=1, 2, \cdots, m;$  $p_2(x,u,t)$ is the probability that at time $t$ unit 2 is
operating and unit 1 is being repaired, the elapsed repair time of the failed unit 1 is $x,$ and the age of unit 2 is $u,;$ $p_3(x,t)$ is the probability that
at time $t$ unit 1 is being repaired and unit 2 is waiting for repair, and the elapsed repair time of the failed unit 1 is $x;$ $p_4(x,u,t)$ is the probability that
at time $t$ unit 1 is being repaired and unit 2 is in ``suspended repair", and the elapsed repair time of the failed unit 1 is $x$ and the elapsed time of the failed
unit 2 is $u;$ $a_1>0$ is a constant parameter of the Erlang distribution; $a_2(x)$ is the repair rate of unit 1; $b_1(x)$ is the life rate; $b_2(x)$ is the repair rate
of unit 2; $\delta(\cdot)$ is the Dirac-delta function.

Well-posedness of the above model and asymptotic behavior of its time-dependent solution have not been obtained until today.

In 1997,  Su \cite{Subaohe} established the following system of equations by using the supplementary variable technique:
\begin{align*}
\frac{\partial p_{0,0,k}(x,t)}{\partial t}+\frac{\partial p_{0,0,k}(x,t)}{\partial x}&=-(\lambda_{0,1}+\lambda_{1,0}+\alpha(x))p_{0,0,k}(x,t),\;k\ge 0,\\
\frac{\partial p_{0,1,0}(x,t)}{\partial t}+\frac{\partial p_{0,1,0}(x,t)}{\partial x}
&=-(\lambda_{1,0}+\lambda_{0,2}+\alpha(x))p_{0,1,0}(x,t)\\
&\quad+\sum^{\infty}_{k=0}\lambda_{0,1}p_{0,0,k}(x,t),\\
\frac{\partial p_{1,0,0}(x,t)}{\partial t}+\frac{\partial p_{1,0,0}(x,t)}{\partial x}
&=-(\lambda_{0,1}+\lambda_{2,0}+\alpha(x))p_{1,0,0}(x,t)\\
&\quad+\sum^{\infty}_{k=0}\lambda_{1,0}p_{0,0,k}(x,t),\\
\frac{\partial p_{0,1,1}(x,t)}{\partial t}+\frac{\partial p_{0,1,1}(x,t)}{\partial x}&=-\mu_{0,1}(x)p_{0,1,1}(x,t),\\
\frac{\partial p_{1,0,1}(x,t)}{\partial t}+\frac{\partial p_{1,0,1}(x,t)}{\partial x}&=-\mu_{1,0}(x)p_{1,0,1}(x,t),\\
\frac{\partial p_{0,2,0}(x,t)}{\partial t}+\frac{\partial p_{0,2,0}(x,t)}{\partial x}&=-\mu_{0,2}(x)p_{0,2,0}(x,t),\\
\frac{\partial p_{2,0,0}(x,t)}{\partial t}+\frac{\partial p_{2,0,0}(x,t)}{\partial x}&=-\mu_{2,0}(x)p_{2,0,0}(x,t),\\
\frac{\partial p_{1,1,0}(x,t)}{\partial t}+\frac{\partial p_{1,1,0}(x,t)}{\partial x}
&=-(\lambda_{0,2}+\lambda_{2,0}+\alpha(x))p_{1,1,0}(x,t)\\
&\quad+\lambda_{1,0}p_{0,1,0}(x,t)+\lambda_{0,1}p_{1,0,0}(x,t),\\
\frac{\partial p_{1,1,1}(x,t)}{\partial t}+\frac{\partial p_{1,1,1}(x,t)}{\partial x}&=-\mu_{1,1}(x)p_{1,1,1}(x,t),\\
\frac{\partial p_{1,2,0}(x,t)}{\partial t}+\frac{\partial p_{1,2,0}(x,t)}{\partial x}&=-\mu_{1,2}(x)p_{1,2,0}(x,t),\\
\frac{\partial p_{2,1,0}(x,t)}{\partial t}+\frac{\partial p_{2,1,0}(x,t)}{\partial x}&=-\mu_{2,1}(x)p_{2,1,0}(x,t),\\
p_{0,0,0}(0,t)&=\delta(t)+\int^{\infty}_0\mu_{0,1}(x)p_{0,1,1}(x,t)\mathrm{d}x\\
&\quad+\int^{\infty}_0\mu_{1,0}(x)p_{1,0,1}(x,t)\mathrm{d}x\\
&\quad+\int^{\infty}_0\mu_{0,2}(x)p_{0,2,0}(x,t)\mathrm{d}x\\
&\quad+\int^{\infty}_0\mu_{2,0}(x)p_{2,0,0}(x,t)\mathrm{d}x\\
&\quad+\int^{\infty}_0\mu_{1,2}(x)p_{1,2,0}(x,t)\mathrm{d}x\\
&\quad+\int^{\infty}_0\mu_{2,1}(x)p_{2,1,0}(x,t)\mathrm{d}x\\
&\quad+\int^{\infty}_0\mu_{1,1}(x)p_{1,1,1}(x,t)\mathrm{d}x,\\
p_{0,0,k}(0,t)&=\int^{\infty}_0\alpha(x)p_{0,0,k-1}(x,t)\mathrm{d}x,\; k\ge 1,\\
p_{0,1,0}(0,t)&=0,\\
p_{1,0,0}(0,t)&=0,\\
p_{0,1,1}(0,t)&=\int^{\infty}_0\alpha(x)p_{0,1,0}(x,t)\mathrm{d}x,\\
p_{1,0,1}(0,t)&=\int^{\infty}_0\alpha(x)p_{1,0,0}(x,t)\mathrm{d}x,\\
p_{0,2,0}(0,t)&=\int^{\infty}_0\lambda_{0,2}p_{0,1,0}(x,t)\mathrm{d}x,\\
p_{2,0,0}(0,t)&=\int^{\infty}_0\lambda_{2,0}p_{1,0,0}(x,t)\mathrm{d}x,\\
p_{1,1,0}(0,t)&=0,\\
p_{1,1,1}(0,t)&=\int^{\infty}_0\alpha(x)p_{1,1,0}(x,t)\mathrm{d}x,\\
p_{1,2,0}(0,t)&=\int^{\infty}_0\lambda_{0,2}p_{1,1,0}(x,t)\mathrm{d}x,\\
p_{2,1,0}(0,t)&=\int^{\infty}_0\lambda_{2,0}p_{1,1,0}(x,t)\mathrm{d}x,\\
p_{0,0,0}(x,0)&=\delta(x),\; p_{0,0,k}(x,0)=0,\quad k\ge 1,\\
p_{1,0,0}(x,0)&=0,\; p_{0,1,0}(x,0)=0,\\
p_{0,1,1}(x,0)&=0,\; p_{1,0,1}(x,0)=0,\\
p_{0,2,0}(x,0)&=0,\; p_{2,0,0}(x,0)=0,\\
p_{1,1,0}(x,0)&=0,\; p_{1,1,1}(x,0)=0,\\
p_{1,2,0}(x,0)&=0,\; p_{2,1,0}(x,0)=0.
\end{align*}
Where $\lambda_{1,0}$ is the failure rate of unit 1 from normal to partial failure; $\lambda_{2,0}$ is the failure rate of unit 1 from partial failure to total failure;
 $\lambda_{0,1}$ is the failure rate of unit 2 from normal to partial failure; $\lambda_{0,2}$ is the failure rate of unite 2 from partial failure to total failure;
  $\alpha(x)$ is  the check rate satisfying $\alpha(x)\ge 0,\; \int^{\infty}_0\alpha(x)\mathrm{d}x=\infty;$ $\mu_{i,0}(x)\; (i=1,2)$ is repair rate of of unit 1
  satisfying  $\mu_{i,0}(x)\ge 0,\; \int^{\infty}_0\mu_{i,0}(x)\mathrm{d}x=\infty;$ $\mu_{0,i}(x)\;(i=1,2)$ is repair time of unit 2 satisfying
$\mu_{0,i}(x)\ge 0,\; \int^{\infty}_0\mu_{0,i}(x)\mathrm{d}x=\infty;$ $\mu_{i,j}(x)\; (i,j=1,2)$ is repair rate of both the units satisfying
$\mu_{i,j}(x)\ge 0,\; \int^{\infty}_0\mu_{i,j}(x)\mathrm{d}x=\infty;$ $p_{0,0,k}(x,t)\; (k\ge 0)$ is the probability that unit 1 is normal, unit 2 is normal, the system
has been checked $k$ times and has an elapsed time of $x;$ $p_{1,0,0}(x,t)$ is the probability that unit 1 is partially failed, unit 2 is normal, the system has not been checked and has an
elapsed time of $x;$ $p_{0,1,0}(x,t)$
is the probability that unit 1 is normal, unit 2 is partially failed, the system has not been checked and has an elapsed time of $x;$ $p_{1,1,0}(x,t)$ is the probability that unit 1 is partially
 failed, unit 2 is partially failed, the system has not been checked and has an elapsed time of $x;$ $p_{0,1,1}(x,t)$ is the probability that unit 1 is normal,
 unit 2 is partially failed, the system has been checked one time and has an elapsed repair time of $x;$ $p_{1,0,1}(x,t)$ is the probability that unit 1 is partially failed, unit 2 is normal, the system has been checked one time and has an elapsed
repair time of $x;$ $p_{0,2,0}(x,t)$ is the
probability that unit 1 is normal, unit 2 is total failure and the system has not been checked; $p_{2,0,0}(x,t)$ is the probability that unit 1 is total failure, unit 2
is normal and the system has not been checked; $p_{1,1,1}(x,t)$ is the probability that unit 1 is partially failed, unit 2 is partially failed,
 the system has been checked one time and has an elapsed repair time of $x$; $p_{1,2,0}(x,t)$ is the probability that unit 1 is partially failed, unit 2 is totally failed, the system has not been checked and has an elapsed repair time of $x$;
 $p_{2,1,0}(x,t)$ is the probability that unit 1 is totally failed, unit 2 is partially failed, the system has not been checked and has an elapsed repair time of $x.$.

Well-posedness of the system of the above equations and asymptotic behavior of its time-dependent solution have not been obtained until now.

In 2006, Yue et al. \cite{YueZhuQin} established the following two models:
\begin{align*}
&\frac{\partial Q_0(x,t)}{\partial t}+\frac{\partial Q_0(x,t)}{\partial x}=-(2\lambda+\alpha(x))Q_0(x,t),\\
&\frac{\partial Q_1(x,t)}{\partial t}+\frac{\partial Q_1(x,t)}{\partial x}=-(2\lambda+\alpha(x))Q_1(x,t)+2\lambda Q_0(x,t),\\
&\frac{\partial Q_2(x,t)}{\partial t}+\frac{\partial Q_2(x,t)}{\partial x}=-(2\lambda+\alpha(x))Q_2(x,t)+2\lambda Q_1(x,t),\\
&\frac{\partial Q_3(x,t)}{\partial t}+\frac{\partial Q_3(x,t)}{\partial x}=-(2\lambda+\mu(x))Q_3(x,t),\\
&\frac{\partial Q_4(x,t)}{\partial t}+\frac{\partial Q_4(x,t)}{\partial x}=-(\lambda+\mu(x))Q_4(x,t)+2\lambda Q_3(x,t),\\
&Q_0(0,t)=\int^{\infty}_0\alpha(x)Q_0(x,t)\mathrm{d}x+\int^{\infty}_0\mu(x)Q_3(x,t)\mathrm{d}x+\delta(t),\\
&Q_1(0,t)=Q_2(0,t)=0,\\
&Q_3(0,t)=\int^{\infty}_0\alpha(x)Q_1(x,t)\mathrm{d}x+\int^{\infty}_0\mu(x)Q_4(x,t)\mathrm{d}x,\\
&Q_4(0,t)=\int^{\infty}_0\alpha(x)Q_2(x,t)\mathrm{d}x,\\
&Q_0(x,0)=\delta(x),\; Q_i(x,0)=0,\; i=1,2,3,4.
\end{align*}
Here $(x,t)\in [x,\infty)\times[0,\infty);$ $Q_i(x,t)\; (i=0,1,2)$ is the probability that at time $t$ the system is normal and the elapsed vacation time of
repairman is $x;$ $Q_i(x,t)\; (i=3,4)$ is the probability that at time $t$ the system is down and the elapsed repair time of
the piece is $x;$ $\lambda$ is the working rate, $\mu(x)$ is the repair rate, $\alpha(x)$ is the vacation rate of the repairman, $\delta(\cdot)$ is the Dirac-delta
function.
\begin{align*}
&\frac{\partial P_0(x,t)}{\partial t}+\frac{\partial P_0(x,t)}{\partial x}=-(2\lambda+\alpha(x))P_0(x,t),\\
&\frac{\partial P_1(x,t)}{\partial t}+\frac{\partial P_1(x,t)}{\partial x}=-(2\lambda+\alpha(x))P_1(x,t)+2\lambda P_0(x,t),\\
&\frac{\partial P_2(x,t)}{\partial t}+\frac{\partial P_2(x,t)}{\partial x}=-(\lambda+\alpha(x))P_2(x,t)+2\lambda P_1(x,t),\\
&\frac{\partial P_3(x,t)}{\partial t}+\frac{\partial P_3(x,t)}{\partial x}=-(2\lambda+\mu(x))P_3(x,t),\\
&\frac{\partial P_4(x,t)}{\partial t}+\frac{\partial P_4(x,t)}{\partial x}=-(\lambda+\mu(x))P_4(x,t)+2\lambda P_3(x,t),\\
&\frac{\partial P_5(x,t)}{\partial t}+\frac{\partial P_5(x,t)}{\partial x}=-\alpha(x))P_5(x,t)+\lambda P_2(x,t),\\
&\frac{\partial P_6(x,t)}{\partial t}+\frac{\partial P_6(x,t)}{\partial x}=-\mu(x)P_6(x,t)+\lambda P_4(x,t),\\
&P_0(0,t)=\int^{\infty}_0\alpha(x)P_0(x,t)\mathrm{d}x+\int^{\infty}_0\mu(x)P_3(x,t)\mathrm{d}x+\delta(t),\\
&P_1(0,t)=P_2(0,t)=P_5(0,t)=0,\\
&P_3(0,t)=\int^{\infty}_0\alpha(x)P_1(x,t)\mathrm{d}x+\int^{\infty}_0\mu(x)P_4(x,t)\mathrm{d}x,\\
&P_4(0,t)=\int^{\infty}_0\alpha(x)P_2(x,t)\mathrm{d}x+\int^{\infty}_0\mu(x)P_6(x,t)\mathrm{d}x,\\
&P_6(0,t)=\int^{\infty}_0\alpha(x)P_5(x,t)\mathrm{d}x,\\
&P_0(x,0)=\delta(x),\; P_i(x,0)=0,\quad i=1,2,3,4,5,6.
\end{align*}
Here $(x,t)\in [x,\infty)\times[0,\infty);$ $P_i(x,t)\; (i=0,1,2,5)$ is the probability that at time $t$ the system is normal and the elapsed vacation time of
repairman is $x;$ $P_i(x,t)\; (i=3,4,6)$ is the probability that at time $t$ the system is down and the elapsed repair time of
the piece is $x;$ $\lambda$ is the working rate, $\mu(x)$ is the repair rate, $\alpha(x)$ is the vacation rate of the repairman, $\delta(\cdot)$ is the Dirac-delta
function.

When $\alpha(x)$ and $\mu(x)$ are constants, it is the same as theorem \ref{geni}, one may obtain the explicit expressions of the time-dependent solutions of the above two models. When $\alpha(x)$ and $\mu(x)$ are non-constants, well-posendess and asymptotic behavior of the time-dependent solutions of the above two models have not been obtained until today.

\textbf{Acknowledgments:} This work is supported by the National Natural Science Foundation of China (No: 11961062).
The author would like to thank professor Frank Neubrander for his valuable suggestions.

\bibliographystyle{plain}

\end{document}